%
%
%
%

%
%
%
%
%
%

\documentclass[11pt]{article}
\usepackage{latexsym,amsmath}
\usepackage{version}
\usepackage{amsmath}
\usepackage{graphicx}
\usepackage{amsthm}
\usepackage{amssymb}
\usepackage{alltt}
\usepackage{epic}
\usepackage{eepic}
\usepackage{fancyheadings}
\usepackage{shadow}
\usepackage{array}
\usepackage{enumerate}
\usepackage[normalem]{ulem}
\usepackage{epsfig}
\usepackage{multicol}
\usepackage{ifthen}
\usepackage{bbold}


\listfiles

\setlength{\topmargin}{ -1.5cm}
\setlength{\oddsidemargin}{ -0.5cm}
\textwidth 17cm
\textheight 22.4cm

\font\Cp = msbm10

\newcommand{\Rrr}{\hbox{\Cp R}}

\newcommand{\binomial}[2]{\genfrac{(}{)}{0pt}{}{#1}{#2}}

\newtheorem{theorem}{Theorem}[section]
\newtheorem{proposition}[theorem]{Proposition}
\newtheorem{lemma}[theorem]{Lemma}

\newtheorem{conjecture}[theorem]{Conjecture}

\newtheorem{example}[theorem]{Example}
\newtheorem{definition}[theorem]{Definition}

\makeatletter
\@addtoreset{equation}{section}
\makeatother

\newcommand{\pair}[2]{\left\langle#1, #2\right\rangle}

\newcommand{\bfo}{{\mathbf 1}}
\newcommand{\Comb}{\mbox{\rm Comb}}
\newcommand{\onethingatopanother}[2]{\genfrac{}{}{0pt}{}{#1}{#2}}

\parskip=12pt

\begin{document}

\title{Asymptotics of the Euler number of bipartite graphs}

\author{{\sc Richard EHRENBORG}
        and
        {\sc Yossi FARJOUN}}

\date{}
\maketitle

\begin{abstract}
We define the Euler number of a bipartite graph on $n$ vertices to
be the number of labelings of the vertices with $1, \ldots, n$ such that
the vertices alternate in being local maxima and local minima.
We reformulate the problem of computing the Euler number of
certain subgraphs of the Cartesian product of a graph $G$ with the
path $P_{m}$ in terms of self adjoint operators.
The asymptotic expansion of the Euler number is given in terms
of the eigenvalues of the associated operator.
For two classes of graphs, the comb graphs and the Cartesian product
$P_{2} \Box P_{m}$,
we numerically solve the eigenvalue problem.
\end{abstract}

\section{Introduction}
\label{section_introduction}

Let $G = (V, E)$ be a bipartite graph 
on $n$ vertices 
with the vertex decomposition
$V = V_{1} \cup V_{2}$,
that is, 
each edge in $G$ has one vertex in $V_{1}$
and the other in $V_{2}$.
An {\em alternating labeling} $\pi$
is a bijection
$\pi : V \longrightarrow \{1, \ldots, n\}$
such that for two adjacent vertices $u \in V_{1}$
and $v \in V_{2}$ we have that $\pi(u) < \pi(v)$.
Another way to phrase this condition is that
every vertex $u$ in $V_{1}$ is a local minimum of the bijection~$\pi$
and
every vertex $v$ in $V_{2}$ is a local maximum.
Define the Euler number ${\cal E}(G)$ to be the number
of alternating labelings $\pi$ of the vertices of the graph.

Two examples are in order.
First, for the path $P_{n}$ on $n$ vertices
the Euler number ${\cal E}(P_{n})$ is the number of alternating permutations,
that is,
the classical Euler number $E_{n}$.
Second, for a cycle $C_n$ of even length $n$ we have that
${\cal E}(C_{n}) = n/2 \cdot E_{n-1}$.
Since there are $n/2$ possible positions for
the largest label~$n$, the labeling
reduces to the path $P_{n-1}$.

Observe that we cannot drop the condition that the graph
$G$ is bipartite, since the labeling can not alternate along
an odd cycle. 
Alternatively, for a non-bipartite graph $G$
let ${\cal E}(G) = 0$.
Observe that the definition of the Euler number
is independent of the order of $V_{1}$ and $V_{2}$.
We also have the trivial lower bound
\begin{equation}
   |V_{1}|! \cdot |V_{2}|!  \leq  {\cal E}(G)  ,
\label{inequality_lower_bound}
\end{equation}
by assigning $V_{2}$ the $|V_{2}|$ largest labels.
Equality in~(\ref{inequality_lower_bound})
is only obtained for the complete bipartite graphs.
Moreover, extending the classic 
``Multiplication Theorem'' due to
MacMahon~\cite[Article~159]{MacMahon},
for the disjoint union of two graphs $G$ and $H$
we have
\begin{equation}
      {\cal E}(G \cup H)
   = 
        \binomial{m+n}{n}
      \cdot
        {\cal E}(G)
      \cdot
        {\cal E}(H)  ,
\label{equation_MacMahon}
\end{equation}
where $G$ and $H$ have $m$ respectively $n$ vertices.

Our interest is to study subgraphs of the Cartesian product
of two graphs.
For graphs
$G$ and $H$,
and $S$ a subset of vertices of $G$,
define
the product
$G \Box_{S} H$
as the graph on the vertex set $V(G) \times V(H)$
where two vertices $(u,u')$ and $(v,v')$ are adjacent in $G \Box_{S} H$
if either
$u = v \in S$ and $u'$ is adjacent to $v'$,
or
$u$ is adjacent to $v$ and $u' = v'$.
The Cartesian product $G \Box H$
is obtained as a special case of the product
$G \Box_{S} H$ with  $S=V(G)$.

\begin{figure}
\setlength{\unitlength}{1.0mm}
\begin{center}
\begin{picture}(40,10)(0,0)
\multiput(0,0)(10,0){5}{\put(0,0){\circle*{2}}
                        \put(0,10){\circle*{2}}
                        \put(0,0){\line(0,1){10}}}
\put(0,10){\line(1,0){40}}
\end{picture}
\end{center}
\caption{The comb graph $\Comb_{m}$ as the product
$P_{2} \Box_{\{1\}} P_{m}$.}
\label{figure_comb}
\end{figure}

For the general problem we obtain the following asymptotics.

\begin{theorem}
Let $G$ be a bipartite graph on $n$ vertices
and $S$ a non-empty subset of the vertices of the graph $G$.
Then 
there exist three positive real numbers $\lambda$, $\mu$ and $c$
such that $\lambda > \mu$ and
$$     \frac{{\cal E}(G \Box_{S} P_{m})}{(m \cdot n)!}
     =
        c
     \cdot
       \lambda^{m-1} + O(\mu^{m-1})   
\:\:\:\: \:\:\:\: \mbox{ as } m \longrightarrow \infty .  $$
\label{theorem_asymptotic}
\end{theorem}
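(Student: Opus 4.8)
The plan is to rewrite the normalized Euler number as a matrix coefficient of a power of a single self-adjoint integral operator, and then extract the asymptotics from its spectrum.

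\emph{Setting up the operator.} The quantity ${\cal E}(G \Box_{S} P_{m})/(mn)!$ is the volume of the set of points $x \in [0,1]^{V(G) \times \{1, \ldots, m\}}$ whose coordinates satisfy, along every edge of $G \Box_{S} P_{m}$, the inequality ``value at the $V_{1}$-endpoint $<$ value at the $V_{2}$-endpoint''. Writing $x_{i} \in [0,1]^{V(G)}$ for the coordinates in the $i$-th copy of $G$, the constraints split into \emph{within-layer} constraints --- each $x_{i}$ lies in the alternation region $A \subseteq [0,1]^{V(G)}$ of $G$, or its reflection $\mathbf{1} - A$, according to the parity of $i$ --- and \emph{between-layer} constraints, coupling $x_{i}(u)$ and $x_{i+1}(u)$ only for $u \in S$, whose direction also alternates with the parity of $i$. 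The measure-preserving change of variables $x_{i} \mapsto \mathbf{1} - x_{i}$ on even $i$ turns every within-layer region into $A$ and every coupling into the same symmetric condition, namely $x_{i}(u) + x_{i+1}(u) < 1$ for $u \in S \cap V_{1}$ and $> 1$ for $u \in S \cap V_{2}$; let $K(y,z) \in \{0,1\}$ be the resulting symmetric kernel on $[0,1]^{S}$. Integrating out, in each layer, the coordinates indexed by $V(G) \setminus S$ replaces the indicator of $A$ by $g(y) := \mathrm{vol}\{ x \in [0,1]^{V(G) \setminus S} : (x,y) \in A \}$. Regrouping the integral and putting $h := \sqrt{g} \in L^{2}([0,1]^{S})$ gives
$$ \frac{{\cal E}(G \Box_{S} P_{m})}{(mn)!} = \langle h, T^{m-1} h \rangle , \qquad (Tf)(y) = \int_{[0,1]^{S}} h(y)\, K(y,z)\, h(z)\, f(z)\, dz . $$

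\emph{Spectral properties of $T$.} The kernel of $T$ is real, symmetric and bounded, so $T$ is self-adjoint, and $\iint h(y)^{2} K(y,z)^{2} h(z)^{2}\, dy\, dz \le (\int g)^{2} < \infty$ shows $T$ is Hilbert--Schmidt, hence compact; it is nonzero. List its eigenvalues $|\lambda_{1}| \ge |\lambda_{2}| \ge \cdots \to 0$ with orthonormal eigenbasis $(e_{k})$. Since $K \ge 0$ and $h \ge 0$, $T$ preserves the cone of nonnegative functions, so by the Krein--Rutman theorem its spectral radius $\|T\| = r(T) > 0$ is an eigenvalue with a nonnegative eigenfunction; relabel so that $\lambda_{1} = \|T\| > 0$ and $e_{1} \ge 0$. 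The crux is to show that $\lambda_{1}$ is simple and strictly dominant. For this I would check that $T^{2}$ is positivity improving: its kernel $h(y) h(z) \int K(y,w) K(w,z)\, g(w)\, dw$ is strictly positive for almost every $(y,z)$, because (i) $g(w) > 0$ for almost every $w$ --- a generic assignment of values in $(0,1)$ to the vertices of $S$ always extends to an alternating labeling of $G$ by real numbers, the set of extensions being a nonempty open polytope, since bipartiteness rules out contradictory chains of strict inequalities among the vertices of $V(G) \setminus S$ --- and (ii) for almost every $(y,z)$ the constraints $K(y,w) = K(w,z) = 1$ cut out a nonempty open sub-box of $[0,1]^{S}$, on which $g$ is positive almost everywhere. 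A positivity-improving compact self-adjoint operator has its largest eigenvalue simple, so $\lambda_{1}^{2}$ is a simple eigenvalue of $T^{2}$; hence $-\lambda_{1} \notin \mathrm{spec}(T)$, $\lambda_{1}$ is simple for $T$, and $\mu := |\lambda_{2}|$ satisfies $0 \le \mu < \lambda_{1}$ (if $\lambda_{2}$ does not exist or vanishes, take any $\mu \in (0, \lambda_{1})$).

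\emph{Conclusion.} Expanding in the eigenbasis,
$$ \frac{{\cal E}(G \Box_{S} P_{m})}{(mn)!} = \sum_{k \ge 1} \lambda_{k}^{\, m-1} \langle h, e_{k} \rangle^{2} = c\, \lambda^{m-1} + \sum_{k \ge 2} \lambda_{k}^{\, m-1} \langle h, e_{k} \rangle^{2} , $$
with $\lambda := \lambda_{1}$ and $c := \langle h, e_{1} \rangle^{2}$. Here $c > 0$ since $h \ge 0$, $h \not\equiv 0$ (indeed ${\cal E}(G \Box_{S} P_{m}) \ge 1$ by~(\ref{inequality_lower_bound})), and $e_{1}$ may be taken strictly positive almost everywhere because $T^{2}$ is positivity improving. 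The remainder is bounded in absolute value by $\mu^{m-1} \sum_{k \ge 2} \langle h, e_{k} \rangle^{2} \le \mu^{m-1} \|h\|^{2}$, hence is $O(\mu^{m-1})$; together with $\lambda > \mu > 0$ this is the asserted expansion.

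The step I expect to be the main obstacle is the positivity/irreducibility input (i)--(ii): showing that the symmetrized transfer operator, or its square, is positivity improving is exactly the statement that the combinatorial problem of extending a partial labeling on $S$ to an alternating labeling of $G$ is never obstructed on a set of positive measure --- this is where the hypothesis $S \ne \emptyset$ truly enters. One also has to dispose separately of connected components of $G$ disjoint from $S$, which make $T$ reducible; this is handled by peeling them off with the Multiplication Theorem~(\ref{equation_MacMahon}), an operation that only changes the constants $\lambda$ and $c$.
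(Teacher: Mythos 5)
Your architecture is the same as the paper's: rewrite ${\cal E}(G \Box_{S} P_{m})/(mn)!$ as a matrix coefficient $\pair{h}{T^{m-1}h}$ of a compact self-adjoint integral operator with a nonnegative kernel, expand in the eigenbasis, and invoke Kre\u{\i}n--Rutman via the positivity-improving property of $T^{2}$ to get a simple, positive, strictly dominant top eigenvalue. The paper keeps the unreduced kernel $\chi(\vec{x},\vec{y})=\prod_{v\in S}[x_{v}+y_{v}\le 1]$ on $L^{2}(X)$, where $X\subseteq[0,1]^{V}$ is the feasible polytope, and records separately (Proposition~\ref{proposition_subspace}) that the nonzero spectrum lives in the subspace of functions of the $S$-coordinates; you integrate out $V\setminus S$ and conjugate by $\sqrt{g}$ so as to land directly on a function space over the $S$-coordinates. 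These are equivalent reformulations, and your tail estimate and the positivity of $c$ are handled correctly.

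The one step that fails as written is your claim (i), that $g>0$ for almost every point of $[0,1]^{S}$. This is false whenever $S$ contains two adjacent vertices of $G$: the within-layer constraint between them involves only $S$-coordinates and is violated on a set of positive measure, so $g$ vanishes there. (Concretely, take $G=P_{2}$ and $S=V(G)$, the paper's $2\times m$ array example: then $V\setminus S=\emptyset$ and $g$ is the indicator of a triangle, zero on half the square.) Your justification rules out only obstructions arising from chains through $V\setminus S$ and overlooks edges inside $S$. As a consequence $T^{2}$ is \emph{not} positivity improving on $L^{2}([0,1]^{S})$, since $T[f]$ vanishes wherever $h$ does. The repair is routine but necessary: replace $[0,1]^{S}$ by the support of $g$, which up to measure zero is the projection of the full-dimensional polytope $X$ onto the $S$-coordinates; over interior points of that projection the fibers are full-dimensional, so $g>0$ a.e.\ there, and your argument (ii) --- the sub-box cut out by $K(y,\cdot)=K(\cdot,z)=1$ always contains a corner neighborhood on which $g>0$ --- then does show that $T^{2}$ is positivity improving on $L^{2}(\operatorname{supp} g)$. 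With that modification the remainder of your proof goes through and agrees with the paper's.
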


\section{The self adjoint operator}
\label{section_operator}

For a bipartite graph $G = (V,E)$ on $n$ vertices
define two subsets $X$ and $Y$
of the $n$-dimensional unit cube $[0,1]^{V}$ 
in $n$-dimensional Euclidean space $\Rrr^{V}$ by
\begin{eqnarray*}
X & = &
   \{ \vec{x} \in [0,1]^{V} \:\: : \:\:
                 x_{u} + x_{v} \leq 1
                 \mbox{ for }
                 \{u,v\} \in E \}    , \\
Y & = &
   \{ \vec{x} \in [0,1]^{V} \:\: : \:\:
                 x_{u} \leq x_{v}
                 \mbox{ for }
                 u \in V_{1},\, v \in V_{2},\, \{u,v\} \in E \} .
\end{eqnarray*}

\begin{lemma}
The two subsets $X$ and $Y$ have the same volume,
which is given by the Euler number of the graph $G$
divided by $n!$.
\end{lemma}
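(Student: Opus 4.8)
The plan is to prove the two assertions separately: first that $\mathrm{vol}(Y) = {\cal E}(G)/n!$ by a direct ranking argument, and then that $\mathrm{vol}(X) = \mathrm{vol}(Y)$ by exhibiting an explicit measure-preserving bijection between the two bodies that exploits the bipartite structure of $G$.

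For $Y$, I would pick $\vec{x} \in [0,1]^{V}$ uniformly at random. Almost surely all $n$ coordinates are distinct, so $\vec{x}$ determines a ranking bijection $\pi : V \longrightarrow \{1, \ldots, n\}$ characterized by $\pi(v) < \pi(w) \iff x_{v} < x_{w}$. For such a generic $\vec{x}$ the defining inequalities of $Y$, namely $x_{u} \leq x_{v}$ for every edge $\{u,v\}$ with $u \in V_{1}$ and $v \in V_{2}$, hold precisely when $\pi(u) < \pi(v)$ for all such edges, that is, precisely when $\pi$ is an alternating labeling. Hence, up to a set of measure zero, $Y$ is the disjoint union of the order simplices $\{\, x_{\pi^{-1}(1)} < \cdots < x_{\pi^{-1}(n)} \,\}$ taken over all alternating labelings $\pi$. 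Each such simplex has volume $1/n!$, and by definition there are ${\cal E}(G)$ of them, so $\mathrm{vol}(Y) = {\cal E}(G)/n!$.

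For the equality $\mathrm{vol}(X) = \mathrm{vol}(Y)$, I would introduce the affine involution $T : \Rrr^{V} \longrightarrow \Rrr^{V}$ defined by $(T\vec{x})_{v} = x_{v}$ for $v \in V_{1}$ and $(T\vec{x})_{v} = 1 - x_{v}$ for $v \in V_{2}$. It maps the cube $[0,1]^{V}$ bijectively onto itself, and since it is a composition of coordinate reflections and a translation its Jacobian determinant is $\pm 1$, so it preserves Lebesgue measure. Because $G$ is bipartite, every edge has the form $\{u,v\}$ with $u \in V_{1}$ and $v \in V_{2}$, and for such an edge the constraint $x_{u} + x_{v} \leq 1$ is transformed by $T$ into $(T\vec{x})_{u} \leq (T\vec{x})_{v}$. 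Thus $T$ carries $X$ bijectively onto $Y$ (it is its own inverse), and therefore $\mathrm{vol}(X) = \mathrm{vol}(Y) = {\cal E}(G)/n!$.

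There is no serious obstacle here; the one point demanding a little care is the measure-theoretic bookkeeping in the first step, namely that the hyperplanes $\{x_{v} = x_{w}\}$ together with the faces of the cube form a null set, so that the decomposition of $[0,1]^{V}$ into the $n!$ order simplices is valid up to measure zero. The role of the bipartite hypothesis is exactly the statement that each edge straddles $V_{1}$ and $V_{2}$, which is what allows the single affine map $T$ to linearize all the constraints of $X$ into those of $Y$ simultaneously.
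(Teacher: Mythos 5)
Your proposal is correct and is essentially the paper's own argument: the involution $T$ fixing $V_{1}$-coordinates and sending $x_{v}\mapsto 1-x_{v}$ for $v\in V_{2}$ is exactly the paper's reflection of $Y$ across the hyperplanes $x_{v}=1/2$, and your ranking argument is the paper's decomposition of the cube into $n!$ order simplices, of which $Y$ comprises those indexed by alternating labelings. You have merely written out the measure-zero bookkeeping more explicitly than the paper does.
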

\begin{proof}
By reflecting the set $Y$ over all of the hyperplanes
of the form $x_{v} = 1/2$ where $v \in V_{2}$
we obtain the set $X$. Hence their volumes agree.
By cutting the $n$-dimensional cube 
with the hyperplanes $x_{u} = x_{v}$ for all $u,v \in V$
we obtain $n!$ simplices of the same volume.
Each simplex corresponds to a permutation
by reading the order of the coordinates of a point
in its interior.
The set $Y$ is the union of a subcollection of these
simplices corresponding to an alternating
labeling of the graph $G$.
\end{proof}

Let the $0,1$-function $\chi$ be defined
on the set $X \times X$ by
$$     \chi(\vec{x}, \vec{y})
    =
       \left\{ \begin{array}{c l}
               1 & \mbox{ if } x_{v} + y_{v} \leq 1
                   \mbox{ for all } v \in S, \\
               0 & \mbox{ otherwise. }
               \end{array} \right.      $$
\begin{definition}
Define the operator $T$ on  $L^{2}(X)$ by
$$
      T[f](\vec{x})
   =
      \int_{\vec{y} \in X} \chi(\vec{x}, \vec{y}) \cdot f(\vec{y}) \:
                                                            d\vec{y} .  
$$
\end{definition}
Since $\chi$ is symmetric, that is,
$\chi(\vec{x}, \vec{y}) = \chi(\vec{y}, \vec{x})$,
the operator $T$ is a self-adjoint Hilbert-Schmidt operator.
Thus we conclude that the spectrum of $T$ is real and discrete
with $0$ as the only possible accumulation point.
Furthermore, all the eigenvalues and
eigenfunctions of the operator are real.
Since \mbox{$0 \leq \chi(\vec{x}, \vec{y}) \leq 1$,}
the eigenvalues $\lambda$ lie in the closed 
interval $[-1,1]$.
Hence, there is a largest eigenvalue in absolute value.
Moreover, the eigenfunctions form a complete orthogonal set.

Let $\bfo$ denote the constant function with value $1$
on set $Y$. Now we have
\begin{proposition}
For a bipartite graph $G$ on $n$ vertices and $S$ a subset of
the vertices of the graph~$G$,
$$     \frac{{\cal E}(G \Box_{S} P_{m})}{(m \cdot n)!}
    =
       \pair{\bfo}{T^{m-1}[\bfo]} .  $$
\end{proposition}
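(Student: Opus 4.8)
The plan is to compute the normalized Euler number $\mathcal{E}(G \Box_S P_m)/(mn)!$ as a volume, using the Lemma, and then unfold that volume as an iterated integral that visibly equals $\pair{\bfo}{T^{m-1}[\bfo]}$. The graph $G \Box_S P_m$ has $mn$ vertices, namely $V(G) \times \{1, \ldots, m\}$; write a point of the cube $[0,1]^{V(G)\times[m]}$ as an $m$-tuple $(\vec{x}^{(1)}, \ldots, \vec{x}^{(m)})$ where each $\vec{x}^{(j)} \in [0,1]^{V(G)}$ records the coordinates on the $j$-th copy of $G$. By the Lemma applied to $G \Box_S P_m$, the quantity $\mathcal{E}(G\Box_S P_m)/(mn)!$ is the volume of the set $X_m$ of points $\vec{x} \in [0,1]^{V(G\Box_S P_m)}$ satisfying $x_u + x_v \le 1$ for every edge $\{u,v\}$ of $G \Box_S P_m$.

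First I would classify the edges of $G \Box_S P_m$ into two types, matching the definition of $\Box_S$: the ``horizontal'' edges come from the path, joining $(w,j)$ to $(w,j+1)$ for $w \in S$ and $1 \le j \le m-1$; the ``vertical'' edges come from $G$, joining $(u,j)$ to $(v,j)$ whenever $\{u,v\} \in E(G)$ and $1 \le j \le m$. Hence the defining inequalities of $X_m$ split correspondingly: for each fixed $j$, the vector $\vec{x}^{(j)}$ must satisfy $x^{(j)}_u + x^{(j)}_v \le 1$ for all $\{u,v\} \in E(G)$, which says exactly that $\vec{x}^{(j)} \in X$ (the set from the Lemma for the single graph $G$); and for each $1 \le j \le m-1$ and each $w \in S$ we need $x^{(j)}_w + x^{(j+1)}_w \le 1$, which is precisely the condition $\chi(\vec{x}^{(j)}, \vec{x}^{(j+1)}) = 1$. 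Therefore
$$
\frac{\mathcal{E}(G \Box_S P_m)}{(mn)!}
=
\int_{\vec{x}^{(1)} \in X} \cdots \int_{\vec{x}^{(m)} \in X}
\prod_{j=1}^{m-1} \chi\bigl(\vec{x}^{(j)}, \vec{x}^{(j+1)}\bigr)
\; d\vec{x}^{(m)} \cdots d\vec{x}^{(1)} .
$$

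Next I would recognize the right-hand side as an $(m-1)$-fold composition of the kernel operator $T$. Integrating out $\vec{x}^{(m)}$ first gives $\int_{X} \chi(\vec{x}^{(m-1)},\vec{x}^{(m)}) \cdot \bfo(\vec{x}^{(m)})\, d\vec{x}^{(m)} = T[\bfo](\vec{x}^{(m-1)})$, since $\bfo$ is the constant function $1$ on $X$ (here one uses that the set $Y$ and the set $X$ have the same measure-theoretic role after the reflections in the Lemma, so integrating $1$ over $X$ and pairing are consistent — more carefully, $\bfo$ should be read as the indicator/constant-$1$ element of $L^2(X)$). Iterating, integrating out $\vec{x}^{(m-1)}, \ldots, \vec{x}^{(2)}$ in turn peels off one factor of $T$ each time, leaving $\int_X \bfo(\vec{x}^{(1)}) \cdot T^{m-1}[\bfo](\vec{x}^{(1)})\, d\vec{x}^{(1)} = \pair{\bfo}{T^{m-1}[\bfo]}$. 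One should check the base case $m=1$ separately (the product is empty, the integral is $\mathrm{vol}(X) = \mathcal{E}(G)/n! = \pair{\bfo}{\bfo} = \pair{\bfo}{T^{0}[\bfo]}$), and invoke Fubini–Tonelli to justify the iterated integration, which is immediate since the integrand is nonnegative and bounded on a set of finite measure.

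The only genuinely delicate point, and the one I would be most careful about, is the bookkeeping between the sets $X$ and $Y$: the Lemma produces the volume as $\mathrm{vol}(Y)$, the operator $T$ lives on $L^2(X)$, and $\bfo$ is described as the constant function on $Y$. These must be reconciled — either by noting that the reflection map from the Lemma is a volume-preserving bijection $Y \to X$ under which the relevant product-of-$\chi$'s conditions transform correctly, or simply by running the whole argument with $X$ throughout (so that the inequalities $x_u + x_v \le 1$ are the ones that appear, and $X_m$ decomposes on the nose as above). Everything else — the decomposition of the edge set, the splitting of the inequalities, the iterated-integral-to-operator-power identification — is routine. I expect this reconciliation of $X$ versus $Y$ to be where the author's proof spends a sentence or two, and the rest to be a short unwinding of definitions.
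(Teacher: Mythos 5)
Your proposal is correct and follows essentially the same route as the paper: the paper expands $\pair{\bfo}{T^{m-1}[\bfo]}$ into the iterated integral of $\prod_j \chi(\vec{x}_j,\vec{x}_{j+1})$ over $X^m$ and identifies the resulting inequality system (coordinates in $[0,1]$, edge conditions within each copy of $G$, and the $\chi$-conditions linking consecutive copies along $S$) with the defining inequalities of $X_{G\Box_S P_m}$, exactly your decomposition read in the reverse direction. Your extra care about the base case $m=1$ and the $X$-versus-$Y$ bookkeeping (which the paper elides by working with $X$ throughout) is sound but does not change the argument.
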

\begin{proof}
Expanding the inner product and each of the $m-1$
applications of the operator $T$, we have that
\begin{eqnarray}
\pair{\bfo}{T^{m-1}[\bfo]}
  & = &
\int_{\vec{x}_{1}} T^{m-1}[\bfo](\vec{x}_{1}) \: d\vec{x}_{1}
\nonumber \\
  & = &
\int_{\vec{x}_{1}} 
\int_{\vec{x}_{2}} 
\chi(\vec{x}_{1},\vec{x}_{2})
  \cdot
T^{m-2}[\bfo](\vec{x}_{2})
\:
d\vec{x}_{2}
d\vec{x}_{1}
\nonumber \\
  & \vdots &
\nonumber \\
  & = &
\int_{\vec{x}_{1}} 
\int_{\vec{x}_{2}} 
  \cdots
\int_{\vec{x}_{m}} 
\chi(\vec{x}_{1},\vec{x}_{2})
  \cdot
\chi(\vec{x}_{2},\vec{x}_{3})
  \cdots
\chi(\vec{x}_{m-1},\vec{x}_{m})
\:
d\vec{x}_{m}
  \cdots
d\vec{x}_{2}
d\vec{x}_{1}  .
\label{equation_m_n}
\end{eqnarray}
Let $\vec{x}_{i}$ be the vector $(x_{v,i})_{v \in V}$.
Then the integral in equation~(\ref{equation_m_n})
is over all of the $m \cdot n$ variables $x_{v,i}$
with the boundary condition that
(i) $0 \leq x_{v,i} \leq 1$,
(ii) $x_{v,i} + x_{v,i+1} \leq 1$ for $1 \leq i \leq m-1$ and $v \in S$,
and
(iii) $x_{u,i} + x_{v,i} \leq 1$ for $\{u,v\}$ an edge in $G$.
These inequalities describe exactly the set
$X_{G \Box_{S} P_{m}}$ and hence the integral is given
by the ratio
${\cal E}(G \Box_{S} P_{m})/(m\cdot n)!$.
\end{proof}

\begin{theorem}
Let $G$ be a bipartite graph on $n$ vertices
and
$S$ a non-empty subset of the vertices of the graph $G$.
Then we have
$$     \frac{{\cal E}(G \Box_{S} P_{m})}{(m \cdot n)!}
     =
       \sum_{k \geq 0}
           \frac{\pair{\varphi_{k}}{\bfo}^{2}}
                {\|\varphi_{k}\|^{2}}
         \cdot
           \lambda_{k}^{m-1} ,  $$
where the eigenvalues of the operator $T$ are
$\{\lambda_{k}\}_{k \geq 0}$
and $\varphi_{k}$ is the eigenfunction associated
to the eigenvalue $\lambda_{k}$.
\label{theorem_main}
\end{theorem}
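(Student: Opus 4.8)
The plan is to diagonalise $T$ and feed the result into the previous proposition, which has already reduced the quantity of interest to the single inner product $\pair{\bfo}{T^{m-1}[\bfo]}$. Since $X$ is contained in the unit cube, it has finite volume, so $\bfo \in L^{2}(X)$. As noted above, $T$ is a self-adjoint Hilbert--Schmidt operator, hence compact, and its eigenfunctions $\{\varphi_{k}\}_{k \geq 0}$ form a complete orthogonal system of $L^{2}(X)$; if the kernel of $T$ is non-trivial we adjoin an orthogonal basis of it to the list $\{\varphi_{k}\}$, each added function having eigenvalue $0$. Consequently $\bfo$ has the Fourier expansion
\[
   \bfo = \sum_{k \geq 0} \frac{\pair{\varphi_{k}}{\bfo}}{\|\varphi_{k}\|^{2}} \, \varphi_{k} ,
\]
the series converging in $L^{2}(X)$, and Parseval's identity gives $\sum_{k \geq 0} \pair{\varphi_{k}}{\bfo}^{2}/\|\varphi_{k}\|^{2} = \|\bfo\|^{2} < \infty$.

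Next I would apply $T^{m-1}$ to this expansion term by term. Because $T$ is bounded, $T^{m-1}$ is a continuous linear operator and hence commutes with the $L^{2}$-convergent sum; combined with the relation $T^{m-1}[\varphi_{k}] = \lambda_{k}^{m-1} \varphi_{k}$, this yields
\[
   T^{m-1}[\bfo] = \sum_{k \geq 0} \frac{\pair{\varphi_{k}}{\bfo}}{\|\varphi_{k}\|^{2}} \, \lambda_{k}^{m-1} \, \varphi_{k} ,
\]
once again with convergence in $L^{2}(X)$. Taking the inner product of both sides with $\bfo$, and using the continuity of $\pair{\bfo}{\cdot}$ together with the fact that $\pair{\varphi_{k}}{\bfo}$ is real, I obtain
\[
   \frac{{\cal E}(G \Box_{S} P_{m})}{(m \cdot n)!}
      = \pair{\bfo}{T^{m-1}[\bfo]}
      = \sum_{k \geq 0} \frac{\pair{\varphi_{k}}{\bfo}^{2}}{\|\varphi_{k}\|^{2}} \, \lambda_{k}^{m-1} ,
\]
where the first equality is the proposition. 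For each fixed $m$ the last series converges absolutely, since $|\lambda_{k}| \leq 1$ and $\sum_{k} \pair{\varphi_{k}}{\bfo}^{2}/\|\varphi_{k}\|^{2} < \infty$.

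This is the standard diagonalisation of a compact self-adjoint operator, so I do not anticipate any real obstacle. The only points that need a word of justification are the completeness of the eigensystem --- which was recorded above as a consequence of $T$ being self-adjoint and Hilbert--Schmidt --- and the two interchanges of a bounded linear map with an infinite $L^{2}$-sum, both of which follow at once from the boundedness of $T$ and the continuity of the inner product. (In the boundary case $m = 1$ one reads $\lambda_{k}^{0} = 1$ for every $k$, and the displayed identity collapses to Parseval's formula, in agreement with $G \Box_{S} P_{1} = G$.)
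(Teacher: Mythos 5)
Your proposal is correct and follows essentially the same route as the paper: expand $\bfo$ in the complete orthogonal eigensystem of the compact self-adjoint operator $T$, apply $T^{m-1}$, and take the inner product with $\bfo$, invoking the previous proposition. You simply spell out the convergence and continuity justifications that the paper leaves implicit.
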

\begin{proof}
Expand the function
$\bfo$ in terms of eigenfunctions:
$$  \bfo
  =
    \sum_{k \geq 1} 
        \frac{\pair{\varphi_{k}}{\bfo}}
             {\|\varphi_{k}\|^{2}}
      \cdot
        \varphi_{k}  .  $$
Apply $T^{m-1}$ and take the inner product with $\bfo$
and the result follows.
\end{proof}

When the set $S$ is empty, Theorem~\ref{theorem_main}
is trivial. In that case
$G \Box_{\emptyset} P_{m}$ is the disjoint union
of $m$ copies of $G$. Using~(\ref{equation_MacMahon})
we have that
$$     \frac{{\cal E}(G \Box_{\emptyset} P_{m})}{(m \cdot n)!}
     =
       \left( \frac{{\cal E}(G)}{n!} \right)^{m}   .  $$

\begin{example}
{\rm
When the graph $G$ consists of a singleton vertex
and $S$ consists of this vertex,
then the product
$G \Box_{S} P_{m}$ is exactly the path on $m$ vertices,
and its Euler number is the classical $E_{m}$.
In this case the operator $T$ is given by
$$   T[f](x) = \int_{0}^{1-x} f(z) \: dz  .  $$
This operator has eigenvalues
$\lambda_{k} = 2/(\pi \cdot k)$
where $k = \ldots, -7, -3, 1, 5, 9, \ldots$
and eigenfunctions
\mbox{$\varphi_{k} = \cos(x/\lambda_{k})$}.
Calculating
$\pair{\varphi_{k}}{\bfo} = \lambda_{k}$
and
$\|\varphi_{k}\|^{2} = \pair{\varphi_{k}}{\varphi_{k}} = 1/2$
we obtain the following classical asymptotic expansion for the 
Euler number
\begin{eqnarray*}
      E_{m}
  & = &
      2 
   \cdot
      m!
   \cdot
      \sum_{k}
              \left( \frac{2}{\pi \cdot k}\right)^{m+1}    \\
  & = &
      2 
   \cdot
      m!
   \cdot
      \sum_{\onethingatopanother{j \geq 1}{j \mbox{ \scriptsize odd}}}
              \left( -1 \right)^{(m+1) \cdot (j-1)/2}  
            \cdot
              \left( \frac{2}{\pi \cdot j}\right)^{(m+1)}  ,
\end{eqnarray*}
where $j = |k|$, that is, $k = (-1)^{(j-1)/2} \cdot j$.
See~\cite[Section~4]{Ehrenborg_Levin_Readdy}.
}
\label{example_classical_Euler_number}
\end{example}

Let $W$ be the subspace of $L^{2}(X)$ consisting of the functions only
depending on the variables $x_{u}$ where $u$ belongs to~$S$.  In the
case when $S$ is the vertex set of the graph $G$ the space $W$ is
$L^{2}(X)$. The following result applies to the case when $S$ is
strictly contained in the vertex set of $G$.
\begin{proposition}
The image of the operator $T$
is contained in subspace $W$.
Hence all the eigenfunctions associated to non-zero
eigenvalues belong to $W$.
\label{proposition_subspace}
\end{proposition}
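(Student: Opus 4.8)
The plan is to exploit the fact that the kernel $\chi$ of the operator $T$ depends on its first argument only through the coordinates indexed by the set $S$.

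First I would record the structural observation about $\chi$. By definition $\chi(\vec{x},\vec{y}) = 1$ precisely when $x_{v} + y_{v} \leq 1$ for every $v \in S$, so as a function of $\vec{x}$ the kernel factors through the projection $\vec{x} \mapsto (x_{v})_{v \in S}$: there is a function $\widetilde{\chi}$ on $[0,1]^{S} \times X$ with $\chi(\vec{x},\vec{y}) = \widetilde{\chi}\bigl((x_{v})_{v \in S}, \vec{y}\bigr)$, and the coordinates $x_{u}$ with $u \notin S$ do not enter at all.

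Next, since the region of integration $X$ in the definition of $T$ is fixed and does not depend on $\vec{x}$, for any $f \in L^{2}(X)$ we obtain
\[
   T[f](\vec{x})
 =
   \int_{\vec{y} \in X} \widetilde{\chi}\bigl((x_{v})_{v \in S}, \vec{y}\bigr) \cdot f(\vec{y}) \, d\vec{y} ,
\]
which is visibly a function of $(x_{v})_{v \in S}$ alone. Hence $T[f] \in W$, so the image of $T$ is contained in $W$. For an eigenfunction, if $T[\varphi_{k}] = \lambda_{k} \varphi_{k}$ with $\lambda_{k} \neq 0$, then $\varphi_{k} = \lambda_{k}^{-1} \cdot T[\varphi_{k}]$ lies in the image of $T$, hence in $W$, which is the second assertion.

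I do not expect a genuine obstacle here; the statement is essentially a one-line consequence of the shape of $\chi$. The only points deserving a word of care are the measure-theoretic reading of $W$ — its elements are the $L^{2}$-classes admitting a representative independent of the coordinates $x_{u}$ with $u \notin S$ — and the fact that $T[f]$ is again square integrable on $X$, which is immediate since $0 \leq \chi \leq 1$ and $X$ has finite measure (indeed $T$ is Hilbert--Schmidt). It is worth noting for later use that $W$ can be identified, isometrically, with an $L^{2}$-space on the projection of $X$ onto the $S$-coordinates equipped with the pushforward of Lebesgue measure; this is precisely what reduces the eigenvalue problem to a lower-dimensional region.
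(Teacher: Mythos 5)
Your argument is exactly the paper's: the kernel $\chi(\vec{x},\vec{y})$ depends on $\vec{x}$ only through the coordinates $x_{v}$ with $v \in S$, the domain of integration is independent of $\vec{x}$, so $T[f]$ lies in $W$, and an eigenfunction with $\lambda_{k} \neq 0$ equals $\lambda_{k}^{-1}T[\varphi_{k}]$ and hence lies in the image. Your added remarks on the $L^{2}$-class reading of $W$ and on $T$ being Hilbert--Schmidt are correct but not needed; the proof is complete and matches the paper's.
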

\begin{proof}
For a vertex $v$ not in the set $S$, observe that the function
$\chi(\vec{x}, \vec{y})$ does not depend on the variable~$x_{v}$.
Hence when integrating
$\chi(\vec{x}, \vec{y}) \cdot f(\vec{y})$
over all 
$\vec{y} \in X$
the resulting function $T[f]$ does not depend on~$x_{v}$,
that is, $T[f]$ belongs to the space $W$. 
The second statement follows from the defining relation
for eigenfunctions.
\end{proof}

The Frobenius-Perron result
applies to matrices, that is,
linear operators on a finite-dimensional vector space.
An operator version
of Frobenius-Perron 
was discovered by Kre\u{\i}n and Rutman~\cite{Krein_Rutman}.
We present a specialized version of their result.
Let $Z$ be a measurable space.
Recall that two functions in $L^{2}(Z)$ are considered
the same if they differ on a set of measure $0$.
We call a function $f \in L^{2}(Z)$ non-negative
if $f(x) \geq 0$ for almost all $x \in Z$.
Similarly, we call
the function $f$ positive
if $f(x) > 0$ for almost all $x \in Z$.
An operator $M$ on $L^{2}(Z)$ is {\em positivity improving}
if for all non-negative but non-zero functions $f$
the function $M[f]$ is positive.
\begin{theorem}[Kre\u{\i}n-Rutman]
Let $M$ be an operator $L^{2}(Z)$ such that
there is a positive integer $k$ so
that $M^{k}$ is positivity improving.
Then the largest eigenvalue $\lambda$ (in modulus) of $M$ 
is real, positive and simple. Moreover, the associated eigenfunction
$\varphi$ is a positive function on $Z$.
\end{theorem}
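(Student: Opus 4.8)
The statement is a specialized form of the operator analogue of the Frobenius-Perron theorem, and the plan is to deduce it from the positive-operator theorem of Kre\u{\i}n and Rutman~\cite{Krein_Rutman}, together with a short bootstrapping argument that absorbs the extra generality of the hypothesis --- namely that it is a \emph{power} $M^{k}$, and not necessarily $M$ itself, that is positivity improving. We may assume $M$ is compact: in every application in this paper $M = T$ is Hilbert--Schmidt with a bounded (indeed $0,1$-valued) kernel, so its nonzero spectrum consists of isolated eigenvalues of finite multiplicity with $0$ as the only possible accumulation point, and, since such an operator carries $L^{2}(Z)$ into $L^{\infty}(Z)$, every eigenfunction for a nonzero eigenvalue is represented by a bounded function. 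Write $N = M^{k}$; both $N$ and its adjoint $N^{*}$, which has the transposed kernel, are again compact and \emph{positive}, meaning they map non-negative functions to non-negative functions.

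First I would extract a Perron eigenvalue for $N$. Being positivity improving, $N$ is not quasinilpotent, so $\rho := \rho(N) > 0$; the Kre\u{\i}n--Rutman theorem, applied to the positive compact operators $N$ and $N^{*}$, then furnishes non-negative eigenfunctions $\psi$ and $\psi^{*}$ with $N\psi = \rho\psi$ and $N^{*}\psi^{*} = \rho\psi^{*}$. Because $\psi \geq 0$ is nonzero and $N$ is positivity improving, $N\psi = \rho\psi$ is strictly positive almost everywhere, so $\psi > 0$ almost everywhere on $Z$; in the same way $\psi^{*} > 0$ almost everywhere.

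Next I would show that $\rho$ is a simple eigenvalue of $N$ and is the only eigenvalue on the circle $|z| = \rho$. Were the $\rho$-eigenspace more than one-dimensional it would contain a nonzero real eigenfunction $w$ that changes sign --- for otherwise every real combination of $\psi$ and a second, independent eigenfunction would be of constant sign, forcing the two to be proportional. Writing $w = w^{+} - w^{-}$ with $w^{\pm} \geq 0$ nonzero, we would then have, almost everywhere, $\rho|w| = |Nw| = |Nw^{+} - Nw^{-}| < Nw^{+} + Nw^{-} = N|w|$, the inequality strict because $Nw^{+}$ and $Nw^{-}$ are both strictly positive; pairing against $\psi^{*} > 0$ and using $\pair{N|w|}{\psi^{*}} = \pair{|w|}{N^{*}\psi^{*}} = \rho\pair{|w|}{\psi^{*}}$ would give the contradiction $\rho\pair{|w|}{\psi^{*}} < \rho\pair{|w|}{\psi^{*}}$. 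The same pairing trick applied to an eigenfunction $h$ with $Nh = \mu h$ and $|\mu| = \rho$ turns $\rho|h| = |Nh| \leq N|h|$ into the equality $N|h| = \rho|h|$, so $|h|$ is a positive multiple of $\psi$; then equality in the triangle inequality, together with the irreducibility following from the positivity improving hypothesis, shows $h$ is a scalar multiple of $\psi$, whence $\mu = \rho$.

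Finally I would transfer these facts from $N = M^{k}$ to $M$. Every eigenvalue $\nu$ of $M$ has $\nu^{k}$ an eigenvalue of $N$, so $|\nu| \leq \rho^{1/k}$. The function $M\psi$ satisfies $N(M\psi) = M(N\psi) = \rho\,(M\psi)$, hence lies in the one-dimensional $\rho$-eigenspace of $N$, so $M\psi = \beta\psi$ for a scalar $\beta$ with $\beta^{k} = \rho$; since $M$ is itself positive in every case to which we apply the theorem, $M\psi \geq 0$ forces $\beta \geq 0$, and therefore $\beta = \rho^{1/k} > 0$. If $\nu$ is an eigenvalue of $M$ with $|\nu| = \beta$, then $\nu^{k}$ lies on the circle $|z| = \rho$, so $\nu^{k} = \rho$, and the corresponding eigenfunction lies in the span of $\psi$, which forces $\nu = \beta$; moreover the $\beta$-eigenspace of $M$ sits inside the $\rho$-eigenspace of $N$, hence is one-dimensional. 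Thus $\lambda := \beta$ is real, positive, simple, and strictly dominant in modulus among the eigenvalues of $M$, with positive eigenfunction $\varphi := \psi$, which is the assertion. The step I expect to carry the genuine weight is the appeal to Kre\u{\i}n--Rutman itself: producing the spectral radius of a positive compact operator as an honest eigenvalue with a non-negative eigenfunction is the real analytic content, supplied by~\cite{Krein_Rutman}; everything downstream is standard Perron--Frobenius bootstrapping, whose only delicacies are staying honestly inside $L^{2}(Z)$ (handled by the boundedness of the eigenfunctions) and the reduction from $M^{k}$ to $M$, which uses the positivity of $M$ available in all our applications.
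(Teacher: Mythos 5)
The paper does not actually prove this theorem: it is stated as a ``specialized version'' of the Kre\u{\i}n--Rutman theorem and is justified solely by the citation to \cite{Krein_Rutman}, so your proposal is not competing with an argument in the text but supplying one. What you do is reduce the strong form (simplicity, strict spectral dominance, strict positivity of the eigenfunction, under the hypothesis that some power of $M$ is positivity improving) to the weak form (the spectral radius of a positive compact operator is an eigenvalue with a non-negative eigenfunction), followed by the standard Perron--Frobenius bootstrapping against the dual eigenfunction $\psi^{*}$. That reduction is sound and is essentially how the strong version is derived in the literature, so as a way of making the citation honest it works. Three caveats are worth recording. First, as you yourself notice, the statement as printed is false without assuming $M$ itself is positivity preserving: if $A$ is positivity improving, then $M=-A$ has $M^{2}$ positivity improving while the dominant eigenvalue of $M$ is negative. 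Your fix --- importing the positivity of $M$ from the applications --- is the right one, but it means you are proving a corrected statement; for the paper's operator $T$ one can also sidestep the issue because $T$ is self-adjoint with non-negative kernel, so $\pair{T[f]}{f}\geq 0$ pins down the sign of the top eigenvalue. Second, the assertion that a positivity improving compact operator is not quasinilpotent is genuinely nontrivial in general (it is de Pagter's theorem); for the self-adjoint $T$ here it is immediate from $\pair{T[f]}{f}>0$ for non-negative nonzero $f$, and you should say which of these you are invoking. Third, your argument that a multi-dimensional $\rho$-eigenspace must contain a sign-changing real eigenfunction (``every real combination would be of constant sign, forcing proportionality'') is not quite a proof; the clean version sets $v=w-c\psi$ with $c$ chosen so that $\pair{v}{\psi^{*}}=0$ and notes that a non-negative nonzero $v$ would be forced strictly positive by the positivity improving property, contradicting orthogonality to the positive $\psi^{*}$. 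None of these is fatal, but the first two are hypotheses or theorems being used silently.
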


Applying Kre\u{\i}n-Rutman to our operator $T$, we have
\begin{proposition}
The operator $T^{2}$ is positivity improving.
The largest eigenvalue (in absolute value) $\lambda$ 
of the operator $T$ is real, positive and simple.
Furthermore, the associated eigenfunction $\varphi$ is positive.
\end{proposition}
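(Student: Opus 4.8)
The plan is to verify the hypothesis of the Kre\u{\i}n-Rutman theorem for the operator $T$ by showing that $T^{2}$ is positivity improving, and then invoke that theorem directly. Concretely, I would first argue that for any non-negative non-zero $f \in L^{2}(X)$ the function $T^{2}[f]$ is strictly positive almost everywhere on $X$ (equivalently, by Proposition~\ref{proposition_subspace}, strictly positive as an element of the subspace $W$). Expanding, $T^{2}[f](\vec{x}) = \int_{\vec{y}, \vec{z} \in X} \chi(\vec{x}, \vec{z}) \chi(\vec{z}, \vec{y}) f(\vec{y}) \, d\vec{z}\, d\vec{y}$, so it suffices to show that the kernel $\kappa(\vec{x}, \vec{y}) = \int_{\vec{z} \in X} \chi(\vec{x}, \vec{z}) \chi(\vec{z}, \vec{y}) \, d\vec{z}$ is strictly positive for almost all pairs $(\vec{x}, \vec{y}) \in X \times X$; once this holds, $T^{2}[f](\vec{x}) \geq \int_{\vec{y}} \kappa(\vec{x},\vec{y}) f(\vec{y})\, d\vec{y} > 0$ whenever $f \geq 0$ is not identically zero, because the integrand is non-negative and positive on a set of positive measure.

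The heart of the argument is therefore a point-selection lemma: given $\vec{x}, \vec{y} \in X$, I claim the set of $\vec{z} \in X$ with $\chi(\vec{x},\vec{z}) = \chi(\vec{z},\vec{y}) = 1$ has positive measure. The natural candidate is $\vec{z}$ near the origin: taking $z_{v}$ small for every $v \in S$ makes $x_{v} + z_{v} \leq 1$ and $z_{v} + y_{v} \leq 1$ automatically (since $x_{v}, y_{v} \leq 1$, and one needs a little room, which a small enough neighborhood of $0$ provides as long as $x_{v}, y_{v} < 1$, which holds off a null set), while for the coordinates $v \notin S$ the functions $\chi$ impose no constraint, so those $z_{v}$ only need to satisfy the edge inequalities defining $X$; choosing all $z_{v}$ near $0$ keeps $\vec{z}$ in $X$. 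Thus a full-dimensional box around a suitable point lies in the admissible region, giving $\kappa(\vec{x}, \vec{y}) > 0$. A mild care is needed because $S$ is only assumed non-empty, not the full vertex set, but the coordinates outside $S$ are genuinely free to be taken small, so the box argument still goes through; this is the step I expect to require the most attention to detail, though it is not deep.

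Granting that $T^{2}$ is positivity improving, the Kre\u{\i}n-Rutman theorem (with $k = 2$) immediately yields that the eigenvalue of $T$ that is largest in absolute value is real, positive and simple, with a positive associated eigenfunction $\varphi$. One small point to record: Kre\u{\i}n-Rutman as stated gives the conclusion for $T^{2}$; since the nonzero eigenvalues of $T^{2}$ are the squares of those of $T$, the largest-modulus eigenvalue of $T^{2}$ is $\lambda^{2}$ where $\lambda$ is largest in modulus for $T$, and simplicity of $\lambda^{2}$ for $T^{2}$ forces $\lambda$ to be the unique eigenvalue of $T$ of that modulus (in particular $-\lambda$ is not an eigenvalue) and to be simple for $T$; positivity of $\varphi$ and the relation $T[\varphi] = \lambda \varphi$ with $T$ positivity improving on the appropriate power then force $\lambda > 0$. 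This completes the proof.
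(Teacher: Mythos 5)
Your proof is correct and takes essentially the same route as the paper: the key step in both is that the intermediate integration point $\vec{z}$ can be chosen in a neighborhood of the origin, where $\chi(\vec{x},\vec{z}) = \chi(\vec{z},\vec{y}) = 1$ automatically (off a null set), after which everything follows from Kre\u{\i}n--Rutman. The paper phrases this as ``$T[f]$ has positive support near $0$, hence $T^{2}[f]$ is positive on the interior'' rather than via the composed kernel $\kappa$, but the content is identical.
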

\begin{proof}
Let $f$ be a non-negative, non-zero function  in $L^{2}(X)$.
By the definition of the operator $T$ we have
that in a neighborhood of $0$ the function
$T[f]$ has a positive support.
By applying the operator~$T$ again
we obtain that every point in the interior of $Y$
takes a positive value in the function $T^{2}[f]$.
The remainder of the proposition follows from
Kre\u{\i}n-Rutman.
\end{proof}

\begin{proof}[Proof of Theorem~\ref{theorem_asymptotic}]
By letting $\lambda$ be the largest eigenvalue
of the operator $T$ and letting $\mu$ be a bound
on the next largest eigenvalue such that $\lambda > \mu$,
the result follows.
\end{proof}

\section{The comb graph}

\begin{table}
$$
\begin{array}{*{6}{r }}
m & E_{m}   & {\cal E}(\Comb_{m}) & \sim{\cal E}(\Comb_{m}) &
 {\cal E}(P_{2} \Box P_{m}) & \sim{\cal E}(P_{2} \Box P_{m}) \\ \hline
 1 &     1   &               1 &  0.99379166  &             1 & 0.98451741 \\
 2 &     1   &               5 &  4.99961911  &             4 &   4.002193 \\       
 3 &     2   &              66 &  65.9990972  &            44 &   43.99713 \\       
 4 &     5   &            1613 &  1612.99965  &           896 &   896.0018 \\        
 5 &    16   &           63480 &  63480.0072  &         29392 &   29391.93 \\         
 6 &    61   &         3662697 &  3.66269757\times 10^6  &       1413792 &   1.413789 \times 10^6 \\
 7 &   272   &       291407424 &  2.91407470\times 10^8  &      93770800 &   9.377064 \times 10^7 \\  
 8 &  1385   &     30572578425 &  3.05725832\times 10^{10}  &    8201380224 &   8.201366 \times 10^9 \\  
 9 &  7936   &   4089549416832 &  4.08955006\times 10^{12}  &  914570667792 &   9.145691 \times 10^{11}\\    
10 & 50521   & 679329771871725 &  6.79329879\times 10^{14}  & 126651310675680&   1.266511 \times 10^{14}      
\end{array}
$$
\caption{Table of the Euler numbers, the
number of alternating $2 \times m$ arrays, comb graph $\Comb_{m}$, and their numerical approximations, denoted by $\sim{\cal E}(\Comb_{m})$ and  $\sim{\cal E}(P_{2} \Box P_{m})$.}
\label{tab:E_n}
\end{table}

\begin{figure}[t!]
\center{\resizebox{!}{5cm}{\includegraphics{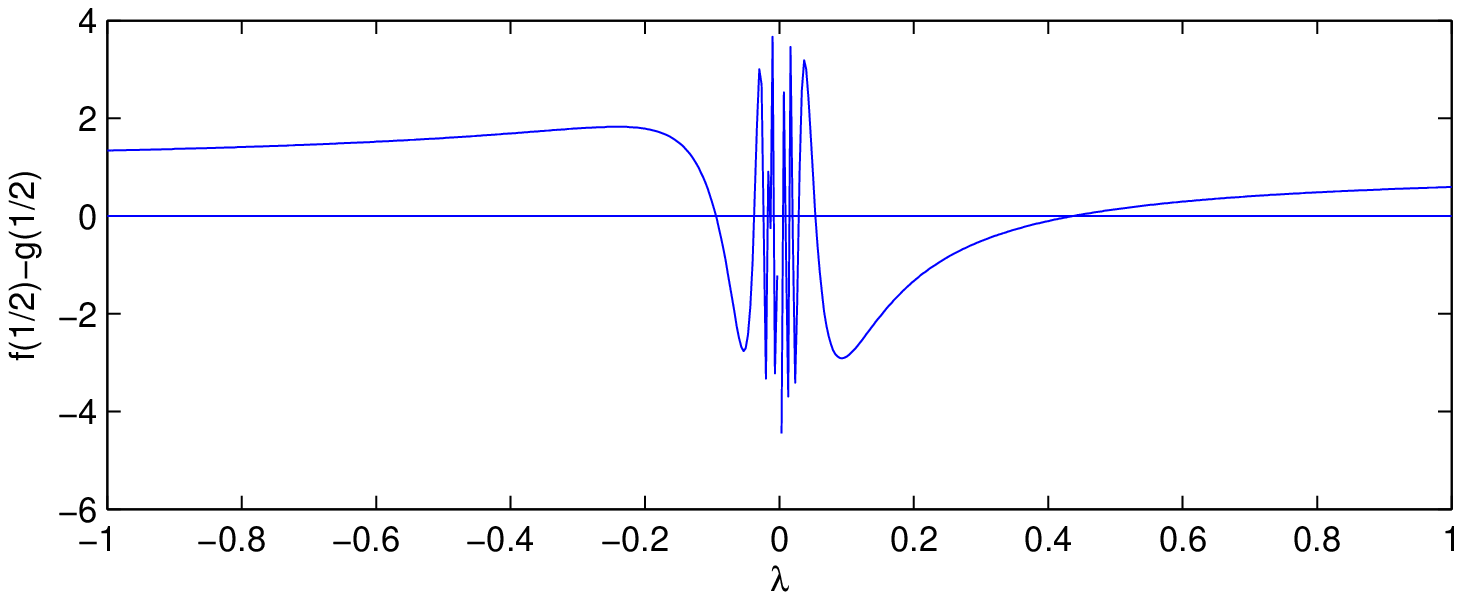}}}
\center{\resizebox{!}{5cm}{\includegraphics{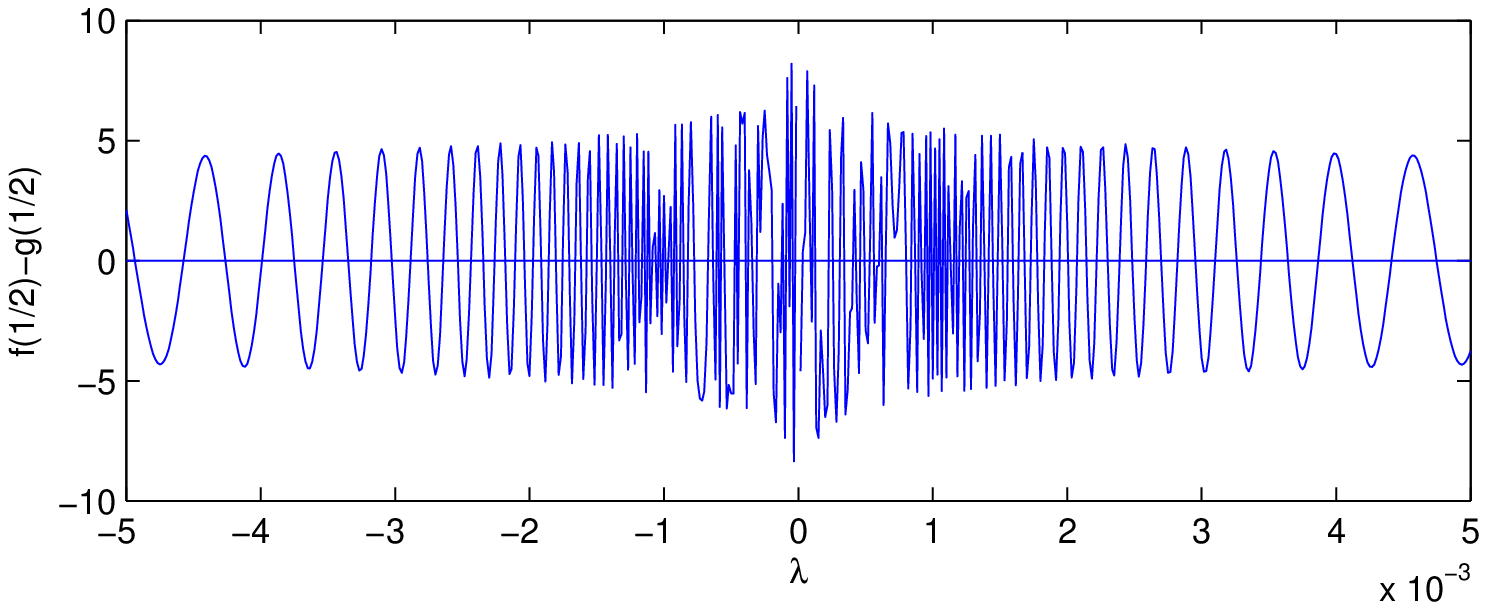}}}

\begin{minipage}{5in}
  \caption{The difference $f(1/2)-g(1/2)$ found by solving the system
    of ODE in~(\ref{equation_comb_system}) with a given value of
    $\lambda$. The roots of this plot correspond to eigenvalues. The
    lower plot is a magnification of the center domain.}
\label{fig:lambda:2}
\end{minipage}
\end{figure}

We now turn our attention to 
the comb graph. See Figure~\ref{figure_comb}.
Recall that the comb graph is defined by the product
$P_{2} \Box_{\{1\}} P_{m}$.
In this case  the space $X$ is the triangle
$$   X = \{(x,y) \:\: : \:\: x,y \geq 0,\, x+y \leq 1\}  .  $$
However, 
following
Proposition~\ref{proposition_subspace}
in order to find the eigenvalue and eigenfunctions of $T$ it is enough to consider the subspace $W$ of 
$L^{2}(X)$ consisting of functions depending only 
on the variable $x$.
Observe that $W$ inherits
the inner product
$$  \pair{f}{g}_{W}
  =
    \int_{0}^{1} (1-x) \cdot f(x) \cdot \overline{g(x)} \: dx      .  $$
Moreover, the operator $T$ is given by
\begin{equation*}
     T[f](x) = \int_{0}^{1-x} (1-z) \cdot f(z) \: dz  .  
\label{eq:comb:integral}
\end{equation*}
The next step is to find all of the eigenvalues and eigenfunctions
of the operator $T$, that is, functions $f(x)$ so that
\begin{equation}
\lambda \cdot f(x) = T[f](x)=\int_{0}^{1-x} (1-z) \cdot f(z) \: dz  . 
\label{eq:comb:eig:fun}
\end{equation}
We convert this integral equation into a differential equation
by differentiating to obtain
\begin{equation}
\lambda \cdot f'(x)     =    - x \cdot f(1-x) . 
\label{equation_comb_diff}
\end{equation}
To convert this into an ordinary differential equation (ODE),
we define $g(x) = f(1-x)$ and thus
\begin{equation}
    \left(\begin{array}{c}
        f \\
        g
          \end{array} \right)^{\prime}
  =
    \left(\begin{array}{c c}
        0 & -x/\lambda \\
        (1-x)/\lambda & 0 
          \end{array} \right)
\cdot
    \left(\begin{array}{c}
        f \\
        g
          \end{array} \right).
\label{equation_comb_system}
\end{equation}
Together with the boundary conditions
\begin{equation}
f(0)=1, \quad g(0) = 0,\quad f(1/2)=g(1/2),
\end{equation}
which come from the integral equation  \eqref{eq:comb:eig:fun} and the algebraic relationship between $f$ and $g$,
this linear system is equivalent to the original integral equation.
The condition that $f(0)=1$ is our choice of normalization for the eigenfunctions. 
The only solution which has $f(0)=g(0)=0$ is identically zero, thus this normalization is valid.
We proceed to solve this ODE numerically.

First, we solve the system from 0 to 1/2 for various values of
$\lambda$ and find the difference $f(1/2)-g(1/2)$. 
This is plotted in Figure~\ref{fig:lambda:2}. 
The roots of this plot correspond to eigenvalues
and we find them numerically. 
For each eigenvalue $\lambda$ the functions $f$ and $g$ are found and
$f(x)$ over the unit interval is reconstructed. 
The eigenfunctions corresponding to the largest (in absolute value)
eigenvalues are plotted in Figure~\ref{fig:eig:fun:2}.
\begin{figure}[t!]
\center{\resizebox{!}{8cm}{\includegraphics{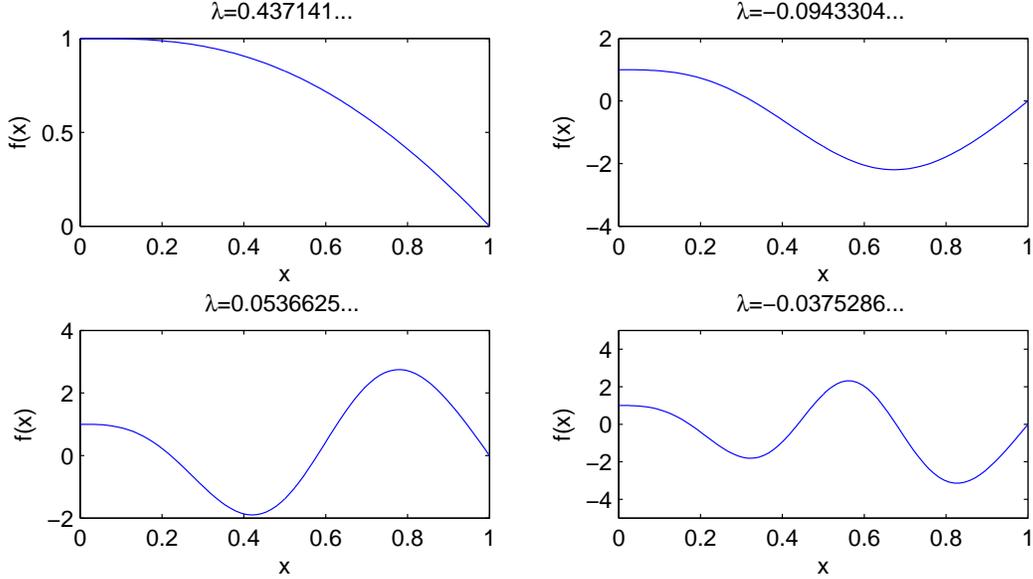}}}
\begin{minipage}{5in}
\caption{The eigenfunctions associated with the four largest (in
  absolute value) eigenvalues. 
  The function $f(x)$, in $[0, 1]$ is
  found by using $f(x)$ in $[1/2, 1]$ and $g(1-x)$ in $[0, 1/2]$.}
\label{fig:eig:fun:2}
\end{minipage}
\end{figure}
The resulting norms and constants $c_{n}$ are tabulated in Table~\ref{tab:num:vals:2}.
\begin{table}[h!]
\begin{center}
\begin{tabular}{llll}
$\lambda$ & $\pair{f(x)}{\bfo}$ & $\|f(x)\|^{2}$& $c_{n}$\\\hline
 0.437141117 & 0.437141151 &0.398916677& 0.479028320\\
 -0.094330445 & 0.094331326&   0.690741849& 0.012882380\\
0.053662538&  0.053688775&  0.829794009&0.003473735\\
 -0.037528586&  -0.037546864&   0.932757330&  0.001511397
\end{tabular}
\begin{minipage}{5in}
\caption{The values of $\lambda$,  $\pair{f(x)}{\bfo}_{W}$, $\|f(x)\|_{W}^{2}$,
  and $c_{n}$ for the first four eigenfunctions shown in
  Figure~\ref{fig:eig:fun:2}. The constant $c_{n}$ is
  the ratio $\pair{f(x)}{\bfo}_{W}^2/\|f(x)\|_{W}^{2}$
  for the $n{}^{\text{th}}$ eigenfunction. }
\label{tab:num:vals:2}
\end{minipage}
\end{center}
\end{table}

The Euler numbers for the $\Comb_{n}$ graphs are calculated from
the numerical approximation for $\lambda_{n}$ and~$c_{n}$ using the
first four terms in the series in Theorem~\ref{theorem_main}. 
They are tabulated in the fourth column of Table~\ref{tab:E_n}.

\section{Alternating $2$ by $m$ arrays}

The Euler number of the Cartesian product of two
paths $P_{m}$ and $P_{n}$ counts the number of
{\em alternating $m$ by $n$ arrays.}
That is, 
the number of
assignments of the integers $1,2, \ldots, m \cdot n$
to an $m$ by $n$ array such that
each entry is a local maximum or a local minimum.
Hence, if $i+j$ is even then
the entry $a_{i,j}$ should be less than
the four adjacent entries
$a_{i-1,j}, a_{i+1,j}, a_{i,j-1}, a_{i,j+1}$.
Similarly, if $i+j$ is odd then
the entry $a_{i,j}$ should be larger than
the four adjacent entries.

In the following we study the number
of alternating $2$ by $m$ arrays, that is,
the Euler number of the graph
$P_{2} \Box P_{m}$.
The graph $G$ is the path on two vertices $P_{2}$
and $S = \{1,2\}$.
As before, the space $X$ is the triangle
$$   X = \{(x,y) \:\: : \:\: x,y \geq 0, x+y \leq 1\}  .  $$
Observe that the operator $T$ has the form
$$
T[f](x,y)
    =
\int_{R} f(z,w) \: dz\, dw  ,
$$
where $R$ is the region described by
the inequalities
$0 \leq z \leq 1-x$,
$0 \leq w \leq 1-y$ and
$z+w \leq 1$.
Since $x + y \leq 1$,
equivalently
$(1-x) + (1-y) \geq 1$,
the inequality $z + w \leq 1$
cuts off a triangle from the rectangle
$[0,1-x] \times [0,1-y]$.
Hence we have
\begin{eqnarray}
T[f](x,y)
  & = &
\int_{0}^{y} \int_{0}^{1-y} f(z,w) \: dw\, dz
  +
\int_{y}^{1-x} \int_{0}^{1-z} f(z,w) \: dw\, dz 
\label{equation_P_2_one} \\
  & = &
\int_{0}^{x} \int_{0}^{1-x} f(z,w) \: dz\, dw
  +
\int_{x}^{1-y} \int_{0}^{1-w} f(z,w) \: dz\, dw  .
\label{equation_P_2_two} 
\end{eqnarray}

In order to study this operator $T$,
it will be easier to work in a different space.
Let $U$ be the space of functions $g(x)$
on the interval $[0,1]$ that satisfy the inequality

$$     \int_{X}
            (g(x) + g(y)) \cdot \overline{(g(x) + g(y))} \: dx\, dy
     < \infty   .  $$
Enrich the space $U$ with the following inner product
$$  \pair{g}{h}_{U}
  =
    \int_{X}
            (g(x) + g(y)) \cdot \overline{(h(x) + h(y))} \: dx\, dy
    .  $$
Define $L$ to be the linear map
$L : U \longrightarrow L^{2}(X)$
defined by $L[g](x,y) = g(x) + g(y)$.
Observe that the map $L$ preserves the inner product,
that is, $L$ is an isometry.
Moreover, $L$ is an injective map.
Furthermore, define the operator $T$ on $U$
by
\begin{equation}
T[g](x)
  =
(1-x) \cdot \int_{0}^{x} g(s) \: ds
  +
\int_{x}^{1-x} (1-s) \cdot g(s) \: ds   .
\label{equation_T_again}
\end{equation}
The reason why we denote this operator also by $T$
will be clear from the next proposition.
\begin{proposition}
The isometry $L$ and the operator $T$ commute,
that is,
$T \circ L = L \circ T$.
\end{proposition}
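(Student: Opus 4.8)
The plan is to verify the identity $T(L[g]) = L(T[g])$ pointwise by direct computation. Fix $g \in U$. The left-hand side is $T$, the operator on $L^2(X)$ defined in~(\ref{equation_P_2_one})--(\ref{equation_P_2_two}), applied to the function $L[g](z,w) = g(z) + g(w)$; the right-hand side is the function $(x,y) \mapsto T[g](x) + T[g](y)$, where $T[g]$ is the operator on $U$ defined in~(\ref{equation_T_again}). So the content is the claim
$$
T[L[g]](x,y) \;=\; T[g](x) + T[g](y)
$$
for all $(x,y) \in X$. I would compute the left-hand side starting from~(\ref{equation_P_2_one}), substituting $f(z,w) = g(z) + g(w)$, and then split each of the two double integrals into the $g(z)$-part and the $g(w)$-part, carrying out whichever of the two inner integrations is immediate.

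The key steps, in order: first, insert $f = g(z)+g(w)$ into~(\ref{equation_P_2_one}) and expand into four pieces; the pieces where the integrand is a function of the "free" variable only (e.g.\ $g(z)$ integrated over $w$, or $g(w)$ integrated over $z$) immediately produce a factor from the length of the inner interval, leaving a single integral. Second, collect the resulting single integrals and recognize that the terms involving $g(\cdot)$ evaluated at a variable ranging near $[0,x]$ assemble into $(1-x)\int_0^x g(s)\,ds + \int_x^{1-x}(1-s)g(s)\,ds = T[g](x)$, while the remaining terms, by the symmetry between the two forms~(\ref{equation_P_2_one}) and~(\ref{equation_P_2_two}) (equivalently, by swapping the roles of $z\leftrightarrow w$ and $x\leftrightarrow y$), assemble into $T[g](y)$. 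Using~(\ref{equation_P_2_two}) in parallel with~(\ref{equation_P_2_one}) is the natural bookkeeping device here, since one form is adapted to extracting the $x$-term and the other to the $y$-term. Finally, having shown $T[L[g]] = L[T[g]]$ as elements of $L^2(X)$, note this is exactly $T \circ L = L \circ T$ on $U$; since $L$ is an isometry and in particular injective, this also confirms that $T$ on $U$ is well-defined (its image lands where it should) and that it is the "same" operator as $T$ on the range of $L$, which justifies the abuse of notation.

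The main obstacle I anticipate is purely organizational rather than conceptual: keeping the regions of integration straight when splitting the wedge-shaped domain $R$, and making sure the boundary between the two inner-integral regimes (the value $z = y$ in~(\ref{equation_P_2_one}), $w = x$ in~(\ref{equation_P_2_two})) is handled consistently so the single integrals recombine cleanly into the two $T[g]$ pieces. A useful sanity check along the way is to test the identity on $g \equiv 1$: then $L[g] \equiv 2$, the left side is $2\,\mathrm{vol}(R)$ and the right side is $2(T[1](x) + T[1](y))$, and both should reduce to the same elementary expression in $x$ and $y$; more generally testing $g(x) = x$ pins down the algebra before committing to the general case. No deep input is needed — only Fubini and the explicit descriptions of $X$, $R$, and the two operators already recorded in the excerpt.
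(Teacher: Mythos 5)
Your proposal is correct and follows essentially the same route as the paper: substitute $L[g](z,w)=g(z)+g(w)$ into $T$, use the form (\ref{equation_P_2_one}) for the $g(z)$ part and (\ref{equation_P_2_two}) for the $g(w)$ part, perform the trivial inner integrations, and recombine. The only detail worth making explicit in the write-up is the final recombination step, which the paper records as the identity $\int_{a}^{b}+\int_{c}^{d}=\int_{a}^{d}+\int_{c}^{b}$ applied to the limits $y,\,1-x,\,x,\,1-y$.
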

\begin{proof}
Apply the original operator $T$ to the function
$L[g](x,y) = g(x) + g(y)$.
We do this by applying equation~(\ref{equation_P_2_one}) to $g(x)$
and
applying equation~(\ref{equation_P_2_two}) to $g(y)$.
We then have
\begin{eqnarray*}
T[L[g]](x,y)
  & = &
\int_{0}^{y} \int_{0}^{1-y} g(z) \: dw\, dz
  +
\int_{y}^{1-x} \int_{0}^{1-z} g(z) \: dw\, dz  \\
  &   &
  +
\int_{0}^{x} \int_{0}^{1-x} g(w) \: dz\, dw
  +
\int_{x}^{1-y} \int_{0}^{1-w} g(w) \: dz\, dw   \\
  & = &
(1-y) \cdot \int_{0}^{y} g(z) \: dz
  +
\int_{y}^{1-x} (1-z) \cdot g(z) \: dz  \\
  &   &
  +
(1-x) \cdot \int_{0}^{x} g(w) \: dw
  +
\int_{x}^{1-y} (1-w) \cdot g(w) \: dw   \\
  & = &
L[T[g]](x,y)  ,
\end{eqnarray*}
where we used the fact that
$\int_{a}^{b} + \int_{c}^{d} = \int_{a}^{d} + \int_{c}^{b}$.
\end{proof}

Hence we have that
$$   \pair{\bfo}{T^{m-1}[\bfo]}_{L^{2}(X)}
   =
     \frac{1}{4} \cdot \pair{\bfo}{T^{m-1}[\bfo]}_{U}  , $$
since $L[1/2 \cdot \bfo] = \bfo$.
Thus it is enough to solve the eigenvalue problem
$\lambda \cdot g(x) = T[g](x)$ in $U$ for non-zero $\lambda$, 
that is,
\begin{equation}
   \lambda \cdot g(x) 
      =
(1-x) \cdot \int_{0}^{x} g(s) \: ds
  +
\int_{x}^{1-x} (1-s) \cdot g(s) \: ds   . 
\label{equation_g}
\end{equation}
Differentiate to obtain
\begin{equation}
   \lambda \cdot g'(x) 
      =
- \int_{0}^{x} g(s) \: ds
  -
  x \cdot g(1-x)  .  
\label{equation_g_prime}
\end{equation}
Differentiate again
\begin{equation}
   \lambda \cdot g''(x) 
      =
- g(x)
  -
  g(1-x)  
  +
  x \cdot g'(1-x)  .  
\label{equation_g_prime_prime}
\end{equation}
As before, we convert this into a linear system of differential
equations by defining
\mbox{$h(x) = g(1-x)$.}
\begin{equation}
    \left(\begin{array}{c}
        g \\
        h \\
        g' \\
        h' 
          \end{array} \right)^{\prime}
  =
    \left(\begin{array}{c c c c}
        0 & 0 & 1 & 0 \\
        0 & 0 & 0 & 1 \\
        -1/\lambda & -1/\lambda & 0 & -x/\lambda\\
        -1/\lambda & -1/\lambda & (1-x)/\lambda & 0
          \end{array} \right)
\cdot
    \left(\begin{array}{c}
        g \\
        h \\
        g' \\
        h' 
          \end{array} \right)
\label{equation_system}
\end{equation}
We solve this system differential system
numerically on the interval $[1/2,1]$.
To find boundary conditions we set \mbox{$x = 1/2$} in
equations~(\ref{equation_g})
and~(\ref{equation_g_prime}).
\begin{eqnarray}
   \lambda \cdot g(1/2) 
  & = &
1/2 \cdot \int_{0}^{1/2} g(s) \: ds  ,
\label{equation_g_half} \\
   \lambda \cdot g'(1/2) 
  & = &
- \int_{0}^{1/2} g(s) \: ds
  -
  1/2 \cdot g(1/2)  .  
\label{equation_g_prime_half}
\end{eqnarray}
\begin{figure}[t]
\center{\resizebox{!}{5cm}{\includegraphics{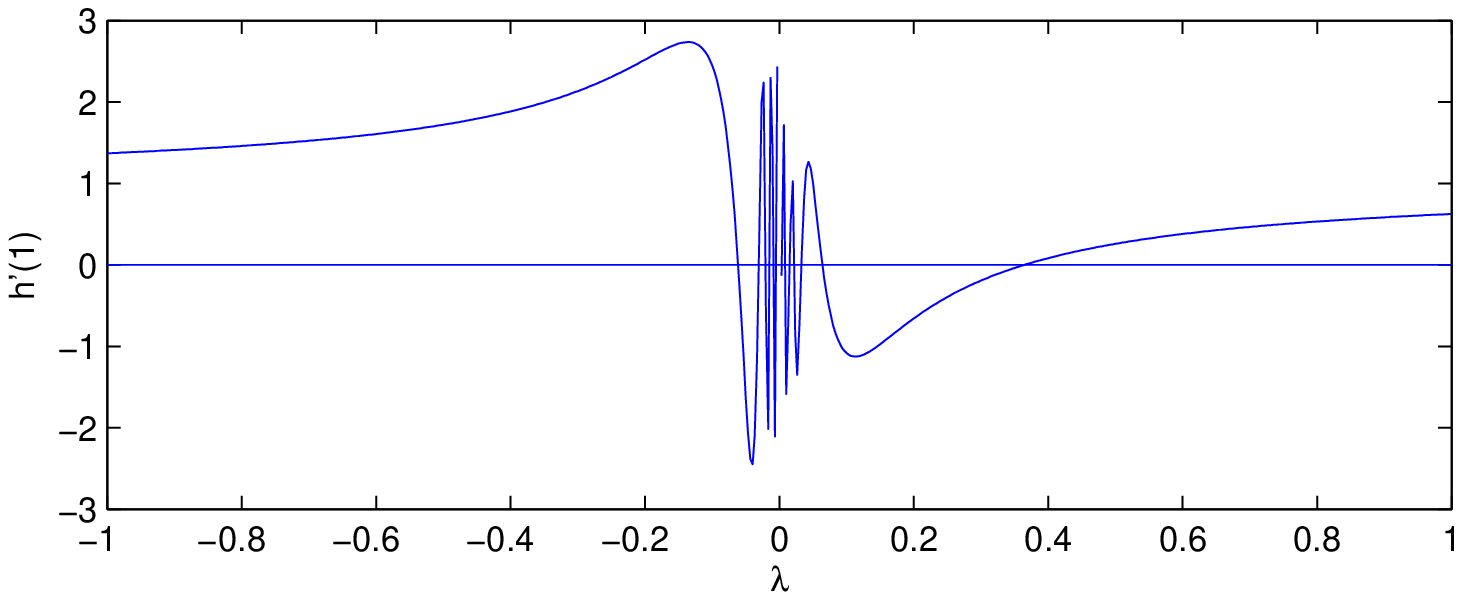}}}
\center{\resizebox{!}{5cm}{\includegraphics{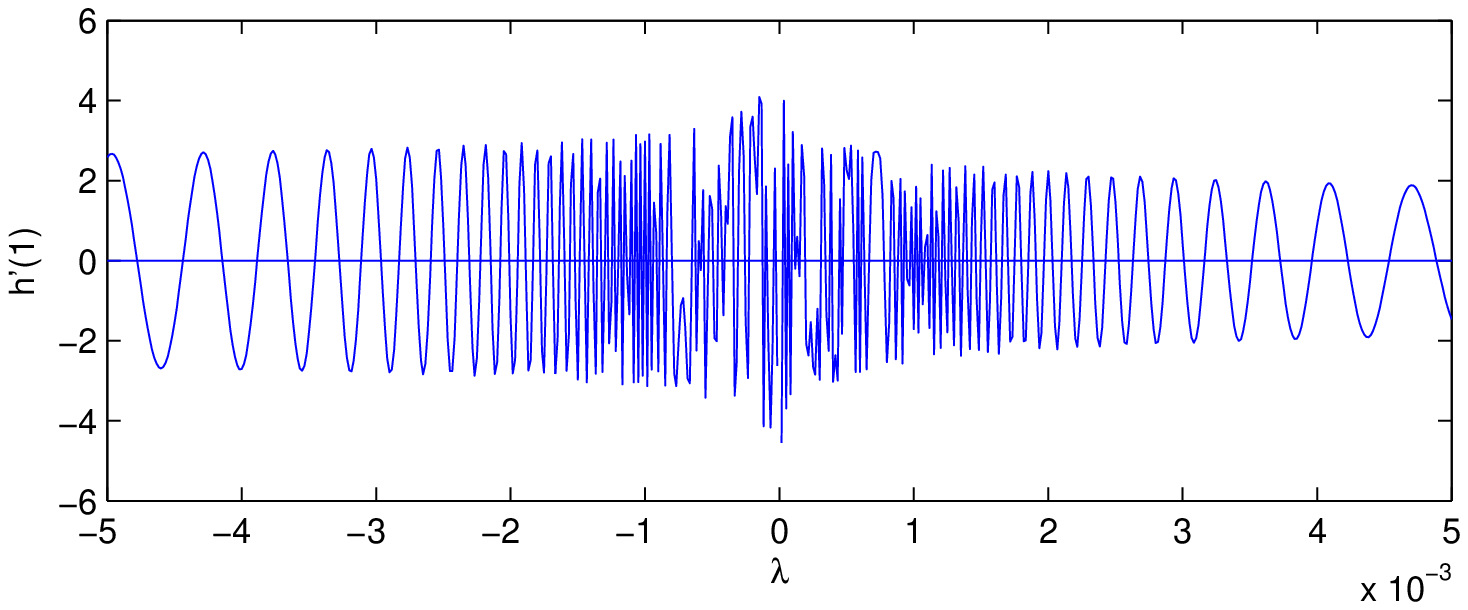}}}
\begin{minipage}{5in}
\caption{The value of $h'(1)$ found by solving the system of ODE's
  in~(\ref{equation_system})
  with a given value of $\lambda$. The roots of this plot
  correspond to eigenvalues. The lower plot is a magnification of the
  center domain.}
\label{fig:lambda}
\end{minipage}
\end{figure}
Observe that \mbox{$g(1/2) = h(1/2) = 0$,}
\eqref{equation_g_half}
and
\eqref{equation_g_prime_half}
imply
that \mbox{$g'(1/2)=h'(1/2)=0$} and therefore 
corresponds to the zero solution of~\eqref{equation_system}.
Since we are looking for the non-zero solution, we normalize such that
\begin{equation}
  g(1/2) = h(1/2) = \lambda/2 \ne 0,
\label{eq:IC:g_h}
\end{equation}
This gives us two conditions at $x=1/2$, and also
implies that
$\int_{0}^{1/2} g(x) \: dx = \lambda^{2}$.
Combined with~\eqref{equation_g_prime_half}
this gives two more conditions
\begin{equation}
  g'(1/2) = -h'(1/2) = -\lambda - \frac{1}{4}  .
\label{eq:IC:g'_h'}
\end{equation}
Thus given the parameter $\lambda$ we can solve the system from
$x=1/2$ to $x=1$. 
At $x=1$ however, the integral equation \eqref{equation_g_prime} yields another constraint:
\begin{equation}
   h'(1) = 0.
\end{equation}
Equivalently, looking at \eqref{equation_g} (and remembering that $g(x) = h(1-x)$) for $x=0,\,1$ we find that 
\begin{equation}
   h(1) = g(0) = -g(1).
\end{equation}
This is only satisfied for a discrete set of eigenvalues $\lambda$.
To find this set we (numerically) solve the ODE starting from $x=1/2$ using the
initial conditions given by~(\ref{eq:IC:g_h}, \ref{eq:IC:g'_h'}) and search values
of $\lambda$ that give $h'(1)=0$. 
Figure~\ref{fig:lambda} shows the value of $h'(1)$ for various values
of $\lambda$. 

A numerical root-finding algorithm finds the first few roots,
that is, eigenvalues $\lambda$.
The associated eigenfunctions are shown in Figure~\ref{fig:eigen:fun}.
Finally, to find $c_{n}$
we must evaluate $\pair{g}{\bfo}_U$ and $\pair{g}{g}_U$.
This is again done numerically. 
\begin{figure}[h!]
\center{\resizebox{!}{8cm}{\includegraphics{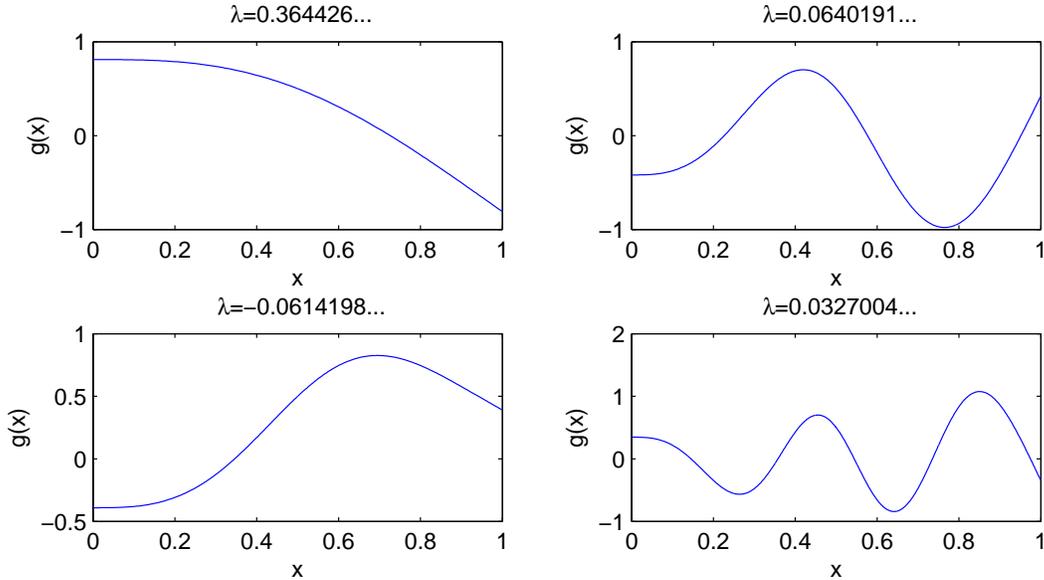}}}
\begin{minipage}{5in}
\caption{The eigenfunctions associated with the four largest (in
  absolute value) eigenvalues. 
  The whole function $g(x)$, from 0 to 1 is
  found by using $g(x)$ between $1/2$ and $1$ and $h(1-x)$ between $0$
  and $1/2$.}
\label{fig:eigen:fun}
\end{minipage}
\end{figure}
The results are shown in Table~\ref{tab:num:vals}.

\begin{table}[h!]
\begin{center}
\begin{tabular}{llll}
$\lambda$ & $\pair{g(x)}{\bfo}$ & $\|g(x)\|^{2}$& $c_{n}$\\\hline
0.364425573038 &   0.59039705924381&0.75905252379149& 0.45921550437989\\
0.064019105418 &  -0.05366486422899&0.29041608589489&0.00991652250888\\
-0.06141983509 &   0.04799387821016&0.10956972701418&0.02102234265267\\
0.03270035262  &0.02267022514715&0.24295945072711&0.00211532873771
\end{tabular}
\begin{minipage}{5in}
\caption{The values of $\lambda$,  $\pair{g(x)}{\bfo}_{U}$, $\|g(x)\|_{U}^{2}$,
  and $c_{n}$ for the first four eigenfunctions shown in
  Figure~\ref{fig:eigen:fun}. The constant $c_{n}$ is the ratio
  $\pair{g(x)}{\bfo}_{U}^2/\|g(x)\|_{U}^{2}$
  for the $n{}^{\text{th}}$ eigenfunction.}
\label{tab:num:vals}
\end{minipage}
\end{center}
\end{table}

The resulting predictions for the Euler numbers are shown
in the last column of Table~\ref{tab:E_n}.

\section{Concluding remarks}
\label{section_remarks}

Another graph to investigate is
the product with the even cycle $C_{2 m}$, that is,
${\cal E}(G \Box_{S} C_{2 m})$. We conjecture that
the resulting Euler number is 
asymptotically a constant times
the associated Euler number for the product with a path,
that is,
$$     \frac{{\cal E}(G \Box_{S} C_{2 m})}
            {{\cal E}(G \Box_{S} P_{2 m})}
     \longrightarrow 
       c    ,  $$
as $m$ tends to infinity
and $c$ is a positive constant less than $1$ for $S$ non-empty.

Does the eigenfunction $\varphi$ corresponding to the largest eigenvalue
$\lambda$ carry information about the distribution of entries
in the first copy of $G$ in an alternating labeling of $G \Box_{S} P_{m}$?
More specifically, in the case of 
alternating $2$ by $m$ arrays, let
${\cal E}(P_{2} \Box P_{m}; i,j)$
be the number of alternating arrays where
the first column consists of the two entries $i$ and $j$,
where $1 \leq i < j \leq 2m$.
Is the integer ${\cal E}(P_{2} \Box P_{m}; i,j)$
approximated by
$c \cdot (2m)! \cdot \lambda^{m-1} \cdot (g(i/2m) + g(1-j/2m))$
where $c$ is the appropriate constant
and $g$~is the first
eigenfunction displayed in
Figure~\ref{fig:eigen:fun}?

These techniques for obtaining the asymptotic behavior
of the Euler numbers 
can be used for other classes of graphs as well.
See for instance the graph $H_{m}$
in Figure~\ref{figure_hexagons}, which
is built by gluing hexagons together. 
Although
Theorem~\ref{theorem_asymptotic}
does not directly apply to this class of graphs,
one can extend the theory to obtain the same
asymptotic result.
Hence we have
$$     \frac{{\cal E}(H_{m})}
            {(4 \cdot m + 2)!}
     =
        c
     \cdot
       \lambda^{m-1} + O(\mu^{m-1})   .  $$
The essential question remaining is can the 
associated eigenvalue problem be solved explicitly.

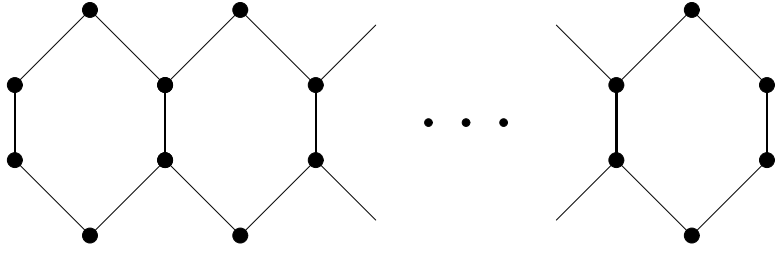
\begin{figure}
\setlength{\unitlength}{1.0mm}
\begin{center}
\begin{picture}(100,30)(0,0)
\multiput(0,0)(20,0){2}{\put(0,10){\circle*{2}}
                        \put(0,20){\circle*{2}}
                        \put(10,0){\circle*{2}}
                        \put(10,30){\circle*{2}}
                        \put(20,10){\circle*{2}}
                        \put(20,20){\circle*{2}}
                        \put(0,10){\line(0,1){10}}
                        \put(0,10){\line(1,-1){10}}
                        \put(0,20){\line(1,1){10}}
                        \put(10,0){\line(1,1){10}}
                        \put(10,30){\line(1,-1){10}}
                        \put(20,10){\line(0,1){10}}}
\multiput(80,0)(20,0){1}{\put(0,10){\circle*{2}}
                        \put(0,20){\circle*{2}}
                        \put(10,0){\circle*{2}}
                        \put(10,30){\circle*{2}}
                        \put(20,10){\circle*{2}}
                        \put(20,20){\circle*{2}}
                        \put(0,10){\line(0,1){10}}
                        \put(0,10){\line(1,-1){10}}
                        \put(0,20){\line(1,1){10}}
                        \put(10,0){\line(1,1){10}}
                        \put(10,30){\line(1,-1){10}}
                        \put(20,10){\line(0,1){10}}}

\put(40,10){\line(1,-1){8}}
\put(40,20){\line(1,1){8}}
\put(80,10){\line(-1,-1){8}}
\put(80,20){\line(-1,1){8}}
\put(55,15){\circle*{1}}
\put(60,15){\circle*{1}}
\put(65,15){\circle*{1}}

\end{picture}
\end{center}
\caption{A bipartite graph $H_{m}$ obtained by gluing $m$ hexagons.}
\label{figure_hexagons}
\end{figure}

Keeping $n$ fixed we know that
${\cal E}(P_{n} \Box P_{m})
\sim
c_{(n)} \cdot (n \cdot m)! \cdot \lambda_{(n)}^{m-1}$
for a constant $c_{(n)}$ and the largest eigenvalue $\lambda_{(n)}$.
Can anything be determined about the sequence
$\lambda_{(n)}$?
What can be said about the asymptotics
of the Euler number
${\cal E}(P_{m} \Box P_{m})$
as $m$ tends to infinity?

A different direction is to study the descent number of directed graphs
(digraphs). For a digraph $G = (V,E)$ on $n$ vertices define its
{\em descent number} to be the number of labelings $\pi$ of the vertices
with $1$ through $n$ such that for each directed edge $u \rightarrow
v$ we have that $\pi(u) < \pi(v)$. If the digraph contains
a directed cycle then the descent number is zero. For an acyclic
digraph (digraphs without directed cycles) the descent number is
strictly positive. 
The classical descent set statistics for permutations
is obtained be looking at orientations of the path.
By gluing directed graphs together, one obtains
classes of graphs whose asymptotics of the descent number
is natural to study via linear operators and their eigenvalues.

The technique of translating
a combinatorial problem into a problem of studying
an operator and its spectrum was also
explored in~\cite{Ehrenborg_Kitaev_Perry},
where consecutive pattern avoiding in permutations were studied.

Finally, we end with a purely enumerative 
question for trees (connected graphs without cycles).
\begin{conjecture}
For a tree $T$ on $n$ vertices the classical Euler number $E_{n}$
is a lower bound for ${\cal E}(T)$, that is,
$$  {\cal E}(T) \geq E_{n} .  $$
Furthermore, equality only holds when the tree $T$
is the path $P_{n}$.
\end{conjecture}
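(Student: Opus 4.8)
The first step is to recast the statement in terms of linear extensions. Orient every edge of the tree $T$ from its $V_1$-endpoint to its $V_2$-endpoint; this turns $V$ into a height-two poset $P_T$ whose Hasse diagram is $T$, and the alternating labelings of $T$ are precisely the linear extensions of $P_T$. Writing $e(\,\cdot\,)$ for the number of linear extensions, we have ${\cal E}(T)=e(P_T)$, and for $T=P_n$ the poset $P_T$ is the zigzag (fence) poset, so $e(P_T)=E_n$. Thus the conjecture says that the fence minimizes the number of linear extensions among all posets whose Hasse diagram is a tree on $n$ vertices.

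I would prove this by a \emph{compression} argument that turns $T$ into $P_n$ one step at a time without ever increasing ${\cal E}$. If $T$ is not a path, root it at a leaf and let $v$ be a deepest vertex of degree $\ge 3$; then every branch at $v$ directed away from the root is a pendant path, and there are at least two of them. Let $\ell$ be the leaf ending one such path, say $\ell=u_0-u_1-\cdots-u_{k-1}-v$, let $B$ be a second pendant path at $v$, attached through the neighbor $w$ of $v$, and form $T'$ from $T$ by deleting the edge $vw$ and adding the edge $\ell w$. Then $T'$ is again a tree on $n$ vertices with exactly one fewer leaf than $T$; since the only tree with two leaves is $P_n$, iterating the move terminates at $P_n$. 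Hence the whole conjecture reduces to the single inequality ${\cal E}(T')\le {\cal E}(T)$, and tracking when it is strict yields the ``equality iff $P_n$'' clause.

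To establish ${\cal E}(T')\le {\cal E}(T)$, note that $T$ and $T'$ have the same edge set apart from $vw$ versus $\ell w$. Let $Q_0$ be the forest poset coming from the common edges; deleting $vw$ splits $T$ into the pendant path $B$ containing $w$ and a remaining component $M$ containing both $v$ and $\ell$ (namely the rest of the tree with the pendant fence $u_0,\ldots,u_{k-1},v$ attached at $v$). Comparing the single extra relation present in each poset gives, assuming as we may that $v\in V_1$, that ${\cal E}(T)=e(Q_0)\cdot\Pr_{Q_0}(v\prec w)$ and ${\cal E}(T')=e(Q_0)\cdot\Pr_{Q_0}(\ell\prec w)$ when the parity of $k$ keeps the $2$-coloring of $B$ unchanged (the opposite parity, where $\ell$ becomes a maximal element of $M$, is analogous but uses a companion inequality; the smallest instance $K_{1,3}\to P_4$ already shows that case behaves just as well). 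Since $w$ lies in a different connected component of $Q_0$ from $v$ and $\ell$, each of these probabilities is the expectation of a decreasing function of the rank, in a uniform linear extension of $M$, of whichever of $v,\ell$ occurs. Hence ${\cal E}(T')\le{\cal E}(T)$ follows once one knows that $\mathrm{rank}_M(v)$ is stochastically dominated by $\mathrm{rank}_M(\ell)$.

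\textbf{This rank comparison inside $M$ is the crux and the main obstacle.} Heuristically $v$ is ``more constrained from above'' than the pendant leaf $\ell$, so it should receive a smaller label on average; I would try to prove the stochastic-dominance statement by induction on $M$ — peeling off the pendant fence one covering relation at a time and controlling the rank of the migrating minimal element — or by invoking an FKG/XYZ-type correlation inequality for linear extensions. I expect the expectation version $\mathbf{E}[\mathrm{rank}_M(v)]\le\mathbf{E}[\mathrm{rank}_M(\ell)]$ (equivalently a linear inequality among the pairwise probabilities $\Pr(y\prec v)$ and $\Pr(y\prec\ell)$) to be reachable by a sign-reversing or direct counting argument, while the full stochastic dominance — which is what one needs when $B$ is an arbitrary pendant path rather than a single edge — is the genuinely delicate point; a fallback is to restrict the compression move to the case in which $B$ is a single pendant edge, where only the expectation inequality is required, at the cost of a more elaborate scheme of moves. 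Strictness of the rank comparison whenever $T\ne P_n$ then upgrades the conclusion to ${\cal E}(T)>E_n$.
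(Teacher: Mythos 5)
You should first note that the paper offers no proof of this statement: it appears in the concluding remarks as an open conjecture, so there is nothing in the paper to compare your argument against, and any complete proof would be new.

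As for your proposal, the overall strategy --- recasting ${\cal E}(T)$ as the number of linear extensions of a height-two tree poset and compressing $T$ toward $P_n$ by a sequence of leaf-relocation moves, each of which must not increase the count --- is a sensible reduction, and your verification that the move strictly decreases the count in the smallest case $K_{1,3}\to P_4$ (where ${\cal E}(K_{1,3})=6>5=E_4$) is correct. But the argument has a genuine gap exactly where you flag it: the claim that $\mathrm{rank}_M(v)$ is stochastically dominated by $\mathrm{rank}_M(\ell)$ in a uniform linear extension of $M$ is not proved, and it is the entire content of the inequality ${\cal E}(T')\le{\cal E}(T)$. Neither the FKG inequality nor the XYZ inequality gives rank dominance between two specific elements of a poset in any direct way, and ``$v$ is more constrained from above than $\ell$'' is not a proof --- $v$ also sits at the junction of several branches and is constrained from below in ways $\ell$ is not, so the heuristic needs genuine justification. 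A second unresolved point is the parity case: when the relocated branch $B$ has the wrong length parity, the bipartition (hence the poset orientation) of $B$ flips between $T$ and $T'$, so the two graphs do not share a common forest poset $Q_0$ and the comparison $\Pr_{Q_0}(v\prec w)$ versus $\Pr_{Q_0}(\ell\prec w)$ is not even well posed as stated; calling this ``analogous'' does not dispose of it, since the required companion inequality compares linear extension counts of two different posets. Until the dominance lemma and the parity case are actually established, the conjecture remains open.
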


\section*{Acknowledgments}

The first author thanks Bob Strichartz
and the Department of Mathematics at MIT where this paper
was completed.
The first author was partially
supported by
National Security Agency grant H98230-06-1-0072.

\newcommand{\journal}[6]{{\sc #1,} #2, {\it #3} {\bf #4} (#5), #6.}
\newcommand{\book}[4]{{\sc #1,} ``#2,'' #3, #4.}
\newcommand{\bookf}[5]{{\sc #1,} ``#2,'' #3, #4, #5.}
\newcommand{\thesis}[4]{{\sc #1,} ``#2,'' Doctoral dissertation, #3, #4.}
\newcommand{\springer}[4]{{\sc #1,} ``#2,'' Lecture Notes in Math.,
                          Vol.\ #3, Springer-Verlag, Berlin, #4.}
\newcommand{\preprint}[3]{{\sc #1,} #2, preprint #3.}
\newcommand{\preparation}[2]{{\sc #1,} #2, in preparation.}
\newcommand{\appear}[3]{{\sc #1,} #2, to appear in {\it #3}}
\newcommand{\submitted}[4]{{\sc #1,} #2, submitted to {\it #3}, #4.}
\newcommand{\JCTA}{J.\ Combin.\ Theory Ser.\ A}

\newcommand{\communication}[1]{{\sc #1,} personal communication.}

{\small

}

\bigskip
\noindent
{\em R.\ Ehrenborg,
Department of Mathematics,
University of Kentucky,
Lexington, KY~40506-0027}, \\
{\tt jrge@ms.uky.edu} \\

\noindent
{\em Y.\ Farjoun,
Department of Mathematics,
MIT,
Cambridge, MA~02139-4307}, \\
{\tt yfarjoun@math.mit.edu}.

\end{document}